\documentclass[12pt]{article}

\usepackage[a4paper,margin=27mm]{geometry}
\usepackage[T1]{fontenc}
\usepackage{lmodern}
\usepackage{microtype}
\usepackage{amsmath,amssymb,amsfonts,amsthm,mathtools,mathrsfs}
\usepackage{bm}
\usepackage{booktabs}
\usepackage{graphicx}
\usepackage{enumitem}
\usepackage{xcolor}
\usepackage{hyperref}
\usepackage[nameinlink,noabbrev]{cleveref}
\usepackage{fancyhdr}
\usepackage{setspace}
\usepackage{verbatim}
\usepackage{subcaption}
\usepackage{tikz-cd}
\hypersetup{
	colorlinks=true,
	linkcolor=blue!50!black,
	citecolor=blue!50!black,
	urlcolor=blue!50!black,
	pdftitle={Spectral Convergence of the Multipole Expansion Method for Acoustic Scattering in Three Dimensions},
	pdfauthor={Jinrui Zhang and Jun Lai}
}
\graphicspath{{figure/}}
\allowdisplaybreaks[2]

\newtheorem{theorem}{Theorem}[section]
\newtheorem{lemma}[theorem]{Lemma}
\newtheorem{proposition}[theorem]{Proposition}

\newtheorem{assumption}[theorem]{Assumption}
\theoremstyle{definition}

\newtheorem{remark}[theorem]{Remark}

\newcommand{\Sph}{\mathbb S^2}
\newcommand{\R}{\mathbb R}
\newcommand{\seqsp}{\mathcal{H}}
\newcommand{\inc}{\mathrm{inc}}
\newcommand{\sca}{\mathrm{s}}
\newcommand{\diff}{\mathrm{diff}}
\newcommand{\Id}{\mathbb I}
\newcommand{\bA}{\mathbb A}
\newcommand{\bD}{\mathbb D}
\newcommand{\bW}{\mathbb W}
\newcommand{\PN}{\mathbb P_N}
\newcommand{\QN}{\mathbb Q_N}
\newcommand{\HS}{\mathrm{HS}}
\newcommand{\dd}{\,\mathrm d}
\newcommand{\e}{\mathrm e}
\newcommand{\ii}{\mathrm i}
\newcommand{\cO}{\mathcal O}
\newcommand{\norm}[1]{\left\lVert#1\right\rVert}
\newcommand{\abs}[1]{\left\lvert#1\right\rvert}

\title{\bfseries Spectral Convergence of the Multipole Expansion Method for Acoustic Scattering in Three Dimensions}
\author{
	Jinrui Zhang\thanks{Department of Mathematics, The Hong Kong University of Science and Technology,	Clear Water Bay, Kowloon, Hong Kong Special Administrative Region of China. Email: \texttt{jinruizhang@ust.hk}.}
	\and
	Jun Lai\thanks{School of Mathematical Sciences and Center for Interdisciplinary Applied Mathematics, Zhejiang University, Hangzhou, Zhejiang 310027, China. Email: \texttt{laijun6@zju.edu.cn}.}
}
\date{}

\begin{document}
	\maketitle

	 \begin{abstract}
	 	Multiple scattering is a fundamental wave interaction phenomenon in acoustics and electromagnetics. The multipole expansion method (MEM) is the basis of many fast algorithms, such as the fast multipole method (FMM), for such problems. However, due to the infinitely many wave reflections involved, its convergence in three dimensions remains unexplored. In this paper, we prove spectral convergence of the MEM for time-harmonic acoustic scattering by finitely many well-separated spheres in three dimensions. Using a diagonally preconditioned single-layer formulation, we analyze the degree-$N$ truncated system in a natural spherical harmonic energy space. We split the interaction truncation into target-side and source-side high-degree parts and choose a different representation for each: a projected Green-kernel representation for the former and degree-wise estimates of a translated spherical wave family for the latter. The resulting argument, based on Parseval's identity and the spherical harmonic addition theorem, provides a general framework for the convergence analysis of MEM and reveals the geometric and physical origins of the convergence factors. We also obtain a sharper estimate through the first-transfer analysis. Numerical experiments confirm the predicted spectral decay and the geometric convergence factor. This paves the way for the convergence analysis of a large class of fast algorithms for multiple scattering.
	 \end{abstract}
     
	\noindent\textbf{Keywords.}
	Multiple scattering, multipole expansion method, spherical harmonics, convergence analysis.
	
	\noindent\textbf{Mathematics Subject Classification.}
	65N12, 65N15, 78M16, 35J05
	
	\section{Introduction}\label{sec:introduction}
	
	Multiple scattering is fundamental in acoustics, electromagnetics,
	elastodynamics, photonics, and the analysis of composite and metamaterial
	media~\cite{ColtonKress2019, Martin2006}. Depending on
	the geometry and frequency regime, it is commonly treated by finite or
	boundary element discretizations \cite{barucqNumericalRobustnessSinglelayer2018,ganesh2009high}, domain decomposition schemes \cite{grote2004dirichlet,xieEfficientIterativeMethod2020}, fast translation algorithms \cite{gimbutasComputationalSoftwareSimple2015}, and Foldy-Lax approximations \cite{huangEfficientAlgorithmGeneralized2013}. A detailed literature review can be found in \cite{Martin2006}. For
	configurations containing many circular or spherical particles, as illustrated in Figure~\ref{Geometryillustration}, separated wave
	representations are especially attractive because they exploit the exact local
	geometry and replace surface meshes by interactions among modal coefficients.
	
	The multipole expansion method (MEM) is the classical realization of this
	idea. It originated in early treatments of cylinder arrays and
	multiple-cylinder scattering \cite{Row1955,Zavisca1913} and later became
	closely connected with T-matrix formulations for scatterers of general shape \cite{caiLargescaleMultipleScattering1999,mishchenkoComprehensiveThematicTmatrix2020a,Waterman1965}. An outgoing separated
	expansion is attached to each particle, translated to every other center, and
	coupled through the boundary conditions. Once the T-matrix is known, the method is meshless, and high accuracy can often be achieved
	with comparatively few unknowns, and the fast multipole method (FMM) acceleration can be incorporated for large-scale systems. MEM formulations and their fast variants are
	widely used for acoustic spheres
	\cite{GumerovDuraiswami2002,
	GumerovDuraiswami2005,KocChew1998}, electromagnetic particles
	\cite{gimbutasFastMultiparticleScattering2013,Waterman1965}, and elastic
	scatterers \cite{laiFrameworkSimulationMultiple2019,laiFastInverseElastic2022}. Despite this long history and broad
	use, to the best of our knowledge, a rigorous convergence theory for truncating the fully coupled
	three-dimensional MEM has remained unavailable.
	
	Several related approximation problems have been analyzed. Estimates for
	truncated spherical addition theorems and fast multipole translations are given
	in \cite{AminiProfit2000,Darve2000,KocSongChew1999,Meng2024,ohnukiTruncationErrorAnalysis2004}, and convergence
	of a reduced-basis acoustic T-matrix approximation was established in
	\cite{GaneshHawkinsHiptmair2012}. Computational studies of multisphere
	multipole systems also show rapid convergence
	\cite{GumerovDuraiswami2002,Lee2015}. Spectral and condition-number estimates for low-frequency multiple scattering in dilute and dense media are given in \cite{antoineSpectralConditionNumber2013,thierrySpectralConditionNumber2013}. These results address individual
	translations, fast summation procedures, or approximations of scattering
	responses in specific settings. However, they do not establish either stability or error estimates for the globally
	coupled degree-$N$ MEM system, in which truncation at each expansion center is
	propagated through arbitrarily many reflections.
	
	The closest related work is the two-dimensional convergence analysis in
	\cite{Fitzpatrick2021}, which provides explicit full and first-transfer convergence factors for both plane wave and point source incidence. Its proof starts from explicit
	cylindrical translation coefficients, estimates coupled double-index sums, and
	reduces them to hypergeometric functions whose large-order asymptotics require
	highly involved analysis. The authors explicitly identified the extension to spheres as an
	open direction and observed that associated Legendre functions and more
	complicated addition theorems could produce hypergeometric expressions too
	difficult to control. The issue is structural: in three dimensions, a spherical harmonic mode of degree $n$
	has $2n+1$ orders, and translation couples $(n,m)$ to $(\ell,r)$
	through Gaunt coefficients
	\cite{EptonDembart1995,GumerovDuraiswami2003,DLMF}, making term-by-term estimates prohibitively tedious, if not intractable.
	
	Our main finding is that these translation coefficients need not be estimated
	individually. In particular, after diagonal preconditioning, we decompose the discarded
	interaction into target-side and source-side high-degree
	parts. For the target-side part, projecting the Helmholtz Green kernel onto a
	fixed output mode cancels the regular Bessel factor in the self block and leaves
	a smooth kernel controlled by a Hankel ratio. For the source-side part, fixing
	an input mode produces a translated outgoing degree-$n$ spherical-wave family.
	The inverse self block is handled by estimating its value and gradient
	energies. In both directions, Parseval's identity and the spherical harmonic
	addition theorem sum all orders before the large-degree estimate is
	taken. Only polynomial factors remain, so the three-dimensional multiplicity
	does not alter the geometric $N$th-root rate. This avoids direct estimates of the Gaunt coefficients and
	hypergeometric calculations anticipated by the two-dimensional analysis~\cite{Fitzpatrick2021}.
	
	In fact, our approach based on the target/source decomposition is not restricted to three dimensions.
	For scattering by circular obstacles in two dimensions, it gives the two directed interaction tails directly
	from a Green kernel row representation and the physical translated wave in a
	column representation, bypassing the coupled double-index sums of
	\cite{Fitzpatrick2021}. The argument therefore supplies a concise framework in both
	two and three dimensions. It also explains the rates geometrically: the full factor
	$a_p/(d_{pq}-a_q)$ compares the target radius $a_p$ with the distance from its center
	to the nearest point of the source particle, whereas the sharper first-transfer
	factor is set by the larger analyticity domain of the isolated-particle field.
	Thus the convergence factors arise from propagation geometry and analyticity,
	not only from cancellation in special-function formulas.
	
	For the analysis we use the coefficients of a single-layer density
	rather than the conventional outgoing multipole amplitudes. This choice
	makes the natural Sobolev structure and the interaction tails particularly
	transparent. At every finite truncation degree, the density coefficients
	are related diagonally to the standard outgoing multipole coefficients, so
	the resulting truncated system is the same MEM discretization written in a
	different set of variables. The
	single-layer nonresonance assumption used below is therefore a restriction
	of this analytical representation, not of the underlying exterior
	scattering problem or of the classical Mie system. As pointed
	out in the two-dimensional
	analysis \cite{Fitzpatrick2021}, changing the boundary condition or integral
	representation is expected to alter only subexponential factors, provided
	the corresponding modal self block bounds are available. Moreover, the same argument extends to any fixed spherical Sobolev scale
	$\seqsp^s$, with the additional Sobolev weights affecting only
	polynomial factors.
	The framework also suggests a route toward combining local T-matrix approximations \cite{GaneshHawkinsHiptmair2012} with a global truncation analysis for particles of general shape. In this setting, the coupled convergence rate would depend jointly on the inter-particle translation tails and the local T-matrix truncation errors, with the latter influenced by particle size, shape, and boundary regularity.
	
	The paper is organized as follows. Section~\ref{sec:formulation} formulates
	the scattering problem and the infinite and truncated MEM systems.
	Section~\ref{sec:preparations} gives the exact truncation error identity and the
	degree-wise estimates used later. Section~\ref{sec:full-analysis} proves the
	interaction and incident data tail bounds, stability of the truncated systems,
	and the full convergence theorem. Section~\ref{sec:first} derives the sharper
	first-transfer estimate and compares the framework with the two-dimensional
	theory. Section~\ref{sec:numerics} presents numerical verification, and
	Section~\ref{sec:conclusion} concludes the paper. The spherical harmonic identities are
	collected in Appendix~\ref{app:spherical-identities}.
	
	\section{Formulation of multiple scattering by MEM}\label{sec:formulation}
	

	\begin{figure}[htbp]
  \centering
  \begin{tikzpicture}[
      x={(1.15cm,0cm)},
      y={(-0.42cm,0.32cm)},
      z={(0cm,0.88cm)},
      wave/.style={black,thick,->,>=stealth}
    ]
    \draw[wave]
      (-3.0cm,0.75cm) .. controls (-2.65cm,1.10cm) and (-2.30cm,0.40cm) ..
      (-1.95cm,0.75cm) .. controls (-1.70cm,1.00cm) and (-1.48cm,0.63cm) ..
      (-1.28cm,0.83cm);
    \draw[wave]
      (-3.0cm,1.70cm) .. controls (-2.65cm,2.05cm) and (-2.30cm,1.35cm) ..
      (-1.95cm,1.70cm) .. controls (-1.70cm,1.95cm) and (-1.48cm,1.58cm) ..
      (-1.28cm,1.78cm);
    \node[black,left] at (-2.55cm,1.23cm) {$u^{\mathrm{inc}}$};

    \draw[wave]
      (2.55cm,0.45cm) .. controls (2.85cm,0.68cm) and (3.10cm,0.12cm) ..
      (3.40cm,0.28cm) .. controls (3.70cm,0.42cm) and (3.98cm,-0.12cm) ..
      (4.28cm,-0.28cm);
    \draw[wave]
      (2.60cm,1.15cm) .. controls (2.90cm,0.88cm) and (3.15cm,1.45cm) ..
      (3.45cm,1.18cm) .. controls (3.75cm,0.91cm) and (4.00cm,1.48cm) ..
      (4.33cm,1.22cm);
   
    \draw[wave]
      (2.55cm,1.85cm) .. controls (2.85cm,1.65cm) and (3.10cm,2.18cm) ..
      (3.40cm,2.02cm) .. controls (3.70cm,1.88cm) and (3.98cm,2.42cm) ..
      (4.28cm,2.58cm);
    \node[black,right] at (2.95cm,2.40cm) {$u^{\mathrm{s}}$};

    \foreach \y in {2,1,0}{
      \foreach \z in {2,1,0}{
        \foreach \x in {0,1,2}{
          \pgfmathsetmacro{\ballradius}{0.145 + 0.015*mod(2*\x+3*\y+\z,4)}
          \pgfmathsetmacro{\jx}{0.10*sin(37*\x+71*\y+113*\z)}
          \pgfmathsetmacro{\jy}{0.10*sin(83*\x+29*\y+59*\z+20)}
          \pgfmathsetmacro{\jz}{0.10*sin(47*\x+97*\y+31*\z+50)}
          \shade[ball color=blue!65]
            ({\x+\jx},{\y+\jy},{\z+\jz}) circle[radius=\ballradius cm];
        }
      }
    }
  \end{tikzpicture}
  \caption{Wave scattering of multiple well-separated spheres.}
  \label{Geometryillustration}
\end{figure}

	Consider the multiple acoustic scattering of $M$ distinct spheres, as illustrated in Figure \ref{Geometryillustration}. More precisely, let
	\[
	B_p:=\{\bm{x}\in\R^3:\abs{\bm{x}-\bm{c}_p}<a_p\},
	\qquad
	\Gamma_p:=\partial B_p,
	\qquad p=1,\ldots,M,
	\]
	where $\bm{c}_p\in\R^3$ and $a_p>0$ denote the center and radius of the sphere
	$\Gamma_p$, respectively. Put $d_{pq}:=\abs{\bm{c}_p-\bm{c}_q}$ and assume strict disjointness,
	\begin{equation}\label{eq:disjoint}
		d_{pq}>a_p+a_q,
		\qquad p\ne q.
	\end{equation}
	The exterior domain is
	$\Omega^+:=\R^3\setminus\bigcup_{p=1}^M\overline{B_p}$.
	For a given wavenumber $k>0$, the sound-soft scattered field satisfies
	\begin{equation}\label{eq:pde}
		\begin{aligned}
			\begin{cases}
				(\Delta+k^2)u^{\sca}=0 &\text{in }\Omega^+,\\
				u^{\sca}=-u^{\inc} &\text{on }\Gamma:=\bigcup_{p=1}^M\Gamma_p,\\
				\lim_{r\to\infty}r(\partial_ru^{\sca}-\ii ku^{\sca})=0,
				&r=\abs{\bm{x}}.
			\end{cases}
		\end{aligned}
	\end{equation}
    Note that here the sound-soft boundary condition is not essential. Other types of boundary condition, such as sound-hard or impedance conditions, can be analyzed analogously. For the incident wave, we consider either a plane wave
	\begin{equation}\label{eq:plane-wave}
		u^{\inc}(\bm{x})=\e^{\ii k\widehat{\bm d}\cdot\bm{x}},
		\qquad \widehat{\bm d}\in\Sph,
	\end{equation}
	or a point source
	\begin{equation}\label{eq:point-source}
		u^{\inc}(\bm{x})=\Phi_k(\bm{x},\bm{x}_0)
		:=\frac{\e^{\ii k\abs{\bm{x}-\bm{x}_0}}}{4\pi\abs{\bm{x}-\bm{x}_0}},
		\qquad \bm{x}_0\in\Omega^+.
	\end{equation}

	Such a multiple scattering problem can be solved by a variety of numerical
	techniques~\cite{barucqNumericalRobustnessSinglelayer2018,ganesh2009high}. However, for configurations of spherical particles, the MEM~\cite{Martin2006} is among the most classical and effective approaches. It avoids surface meshing, and reduces inter-particle coupling to translations among modal coefficients. 

	
	
	To introduce the main idea of MEM, we denote 
	$\bm{\omega}=(\sin\theta\cos\varphi,\sin\theta\sin\varphi,\cos\theta)\in\Sph$.
	With the associated Legendre functions in the convention of
	\cite{DLMF}, define the complex spherical harmonics by
	\begin{equation}\label{eq:Y-definition}
		Y_n^m(\theta,\varphi)
		:=
		\left(
		\frac{2n+1}{4\pi}\frac{(n-m)!}{(n+m)!}
		\right)^{1/2}
		P_n^m(\cos\theta)\e^{\ii m\varphi},
		\qquad n\geq0,\quad -n\leq m\leq n.
	\end{equation}
	They satisfy
	\begin{equation}\label{eq:Y-orthogonal}
		\int_{\Sph}Y_n^m(\bm{\omega})\overline{Y_{\ell}^{r}(\bm{\omega})}\dd\bm{\omega}
		=\delta_{n\ell}\delta_{mr}.
	\end{equation}
	Here and below, $n$ is the spherical harmonic \emph{degree} and $m$ is the
	\emph{order}. We use ``the degree-$n$ family'' to mean all $2n+1$
	modes $\{Y_n^m\}_{m=-n}^n$ at a fixed degree, and ``degree-wise'' when an
	estimate is performed after summing over the orders.
	On $\Gamma_p$, let us define
	\begin{equation}\label{eq:surface-basis}
		b_{p,nm}(\bm{c}_p+a_p\bm{\omega}):=a_p^{-1}Y_n^m(\bm{\omega}).
	\end{equation}
	Since $\dd s=a_p^2\dd\bm{\omega}$, the family $\{b_{p,nm}\}$ is orthonormal in $L^2(\Gamma_p)$.
	
	For $f\in\mathcal D'(\Gamma_p)$, let
	\[
	f_{nm}:=\langle f,b_{p,nm}\rangle
	\]
	denote its spherical-harmonic coefficients, with the pairing
	interpreted by duality whenever necessary. Put
	\[
	\langle n\rangle
	:=
	\bigl(1+n(n+1)\bigr)^{1/2}.
	\]
	For $s\in\R$, we use the equivalent spectral Sobolev norm
	\begin{equation}\label{eq:single-sphere-energy-norm}
		\|f\|_{H^s(\Gamma_p)}^2
		:=
		\sum_{n=0}^{\infty}
		\sum_{m=-n}^{n}
		\langle n\rangle^{2s}|f_{nm}|^2.
	\end{equation}
	For each fixed radius $a_p$, this norm is equivalent to the standard Sobolev
	norm on $\Gamma_p$. In particular, the natural density space below uses
	$s=-1/2$. 
	We then define the product energy space
	\begin{equation}\label{eq:hs-definition}
		\seqsp^{-1/2}:=\bigoplus_{p=1}^M H^{-1/2}(\Gamma_p),
	\end{equation}
	with
	\[
	\|\Phi\|_{\seqsp^{-1/2}}^2
	:=
	\sum_{p=1}^M
	\sum_{n=0}^{\infty}
	\sum_{m=-n}^{n}
	\langle n\rangle^{-1}
	|\Phi_{p,nm}|^2.
	\]
	For each fixed collection of radii, this norm is equivalent to the product Sobolev norm on $\bigoplus_pH^{-1/2}(\Gamma_p)$.
	The properties of spherical harmonic and Green's function identities used throughout the
	analysis are collected in Appendix~\ref{app:spherical-identities}.
	
	
	Define the single-layer potential and its Dirichlet trace by
	\begin{equation}\label{eq:single-layer}
		(\mathcal S\phi)(\bm{x}):=\int_{\Gamma}\Phi_k(\bm{x},\bm{y})\phi(\bm{y})\dd s_{\bm y},
		\qquad
		V\phi:=\gamma\mathcal S\phi.
	\end{equation}
	We use the representation $u^{\sca}=\mathcal S\phi$ for the solution of the boundary-value problem \eqref{eq:pde}, which leads to the boundary integral equation
	\begin{equation}\label{eq:bie}
		V\phi=-u^{\inc}|_{\Gamma}.
	\end{equation}
	Let $V_{pq}$ denote the block of $V$ mapping densities on $\Gamma_q$ to
	traces on $\Gamma_p$. Using the spherical wave addition formula (see equation \eqref{eq:green-expansion} in the Appendix), we obtain
	\begin{equation}\label{eq:self-eigenvalue}
		V_{pp}b_{p,nm}=\lambda_n^{(p)}b_{p,nm},
		\qquad
		\lambda_n^{(p)}=\ii k a_p^2j_n(ka_p)h_n^{(1)}(ka_p).
	\end{equation}
	For fixed $z>0$, recall the standard large-order asymptotics for Bessel functions 
	\cite{DLMF}
	\begin{align}
		j_n(z)&=\frac{z^n}{(2n+1)!!}\bigl(1+\cO(n^{-1})\bigr),\label{eq:j-asymptotic}\\
		h_n^{(1)}(z)&=-\ii\frac{(2n-1)!!}{z^{n+1}}
		\bigl(1+\cO(n^{-1})\bigr).\label{eq:h-asymptotic}
	\end{align}
	Consequently, it holds
	\begin{equation}\label{eq:lambda-asymptotic}
		\lambda_n^{(p)}
		=\frac{a_p}{2n+1}\bigl(1+\cO(n^{-1})\bigr).
	\end{equation}
	However, this asymptotic relation does not rule out the possibility that
	$\lambda_n^{(p)}=0$. To ensure that $V_{pp}$ is invertible, we impose the
	following nonresonance assumption.
	\begin{assumption}[Single-layer nonresonance]\label{ass:nonresonance}
		For every $p$ and $n$, $j_n(ka_p)\ne0$. Equivalently, $k^2$ is not an interior Dirichlet eigenvalue of any ball $B_p$.
	\end{assumption}
    \begin{remark}
        The nonresonance assumption is needed only for the single-layer representation \eqref{eq:single-layer}. It can be removed by using a combined layer potential~\cite{ColtonKress2019}. However, the analysis becomes slightly lengthier, although the main idea remains the same. 
    \end{remark}  
    
	 Under the nonresonance assumption, diagonal preconditioning by the isolated sphere self-interaction gives the
	operator form used in the convergence analysis. Specifically, let
	\begin{equation}\label{eq:D-and-A}
		\bD:=\operatorname{diag}(V_{11},\ldots,V_{MM}),
		\qquad
		\bW:=\bD^{-1}V=\Id+\bA,
	\end{equation}
	where
	\begin{equation}\label{eq:A-block}
		A_{pp}=0,
		\qquad
		A_{pq}:=V_{pp}^{-1}V_{pq}\quad(p\ne q).
	\end{equation}
	The right-hand side is
	\begin{equation}\label{eq:G-definition}
		G:=-\bD^{-1}u^{\inc}|_{\Gamma}.
	\end{equation}
	Then the infinite density system for MEM is
	\begin{equation}\label{eq:infinite-system}
		(\Id+\bA)\Phi=G.
	\end{equation}

	We next establish the well-posedness of the infinite system
	\eqref{eq:infinite-system}. The analysis begins with the isomorphism property
	of the self-interaction blocks.

	\begin{lemma}\label{lem:self-inverse}
		Under Assumption \ref{ass:nonresonance}, there is $C_p>0$ such that
		\begin{equation}\label{eq:self-inverse-bound}
			\norm{V_{pp}^{-1}f}_{H^{-1/2}(\Gamma_p)}
			\leq C_p\norm{f}_{H^{1/2}(\Gamma_p)}.
		\end{equation}
		Moreover, $V_{pp}$ is an isomorphism from $H^{t}(\Gamma_p)$ to $H^{t+1}(\Gamma_p)$ for any $t\in \R$.
	\end{lemma}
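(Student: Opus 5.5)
The plan is to work entirely in the spherical harmonic basis, where $V_{pp}$ is diagonal by \eqref{eq:self-eigenvalue}. For $f\in H^{t+1}(\Gamma_p)$ with coefficients $f_{nm}=\langle f,b_{p,nm}\rangle$, the candidate inverse is
\[
V_{pp}^{-1}f:=\sum_{n=0}^\infty\sum_{m=-n}^n \frac{f_{nm}}{\lambda_n^{(p)}}\,b_{p,nm},
\]
which is well defined termwise because Assumption~\ref{ass:nonresonance} gives $j_n(ka_p)\ne0$. We also have $h_n^{(1)}(ka_p)\ne0$, since $j_n$ and $y_n$ have no common real zeros by the Wronskian $j_ny_n'-j_n'y_n=(ka_p)^{-2}$. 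Hence $\lambda_n^{(p)}\ne0$ for every $n$. Both the boundedness of $V_{pp}$ from $H^t$ to $H^{t+1}$ and that of this inverse from $H^{t+1}$ to $H^t$ then reduce to the two-sided bound
\begin{equation*}
c_p\,\langle n\rangle^{-1}\le \abs{\lambda_n^{(p)}}\le C_p'\,\langle n\rangle^{-1},\qquad n\ge0,
\end{equation*}
with constants $0<c_p\le C_p'$ independent of $n$. Given this, Parseval in the norm \eqref{eq:single-sphere-energy-norm} yields
\[
\norm{V_{pp}f}_{H^{t+1}}^2=\sum\langle n\rangle^{2t+2}\abs{\lambda_n^{(p)}}^2\abs{f_{nm}}^2\le C_p'^2\norm{f}_{H^t}^2,
\]
and symmetrically $\norm{V_{pp}^{-1}f}_{H^t}\le c_p^{-1}\norm{f}_{H^{t+1}}$. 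Taking $t=-1/2$ gives \eqref{eq:self-inverse-bound} with $C_p=c_p^{-1}$.

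The two-sided bound follows from the asymptotics \eqref{eq:lambda-asymptotic}. Since $\lambda_n^{(p)}(2n+1)/a_p\to1$ and $\langle n\rangle\sim n+\tfrac12$, there is $n_0$ such that
\[
\tfrac12\,\frac{a_p}{2n+1}\le\abs{\lambda_n^{(p)}}\le 2\,\frac{a_p}{2n+1}\qquad\text{for } n\ge n_0.
\]
The ratio $\langle n\rangle/(2n+1)$ is bounded above and below by positive absolute constants for all $n\ge0$. For the finitely many $n<n_0$, the values $\abs{\lambda_n^{(p)}}\langle n\rangle$ are nonzero by the previous paragraph, so their minimum and maximum are positive and finite. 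Combining the two ranges gives $c_p$ and $C_p'$.

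The only point needing care is the justification of \eqref{eq:self-eigenvalue} and the asymptotics on distributions. $V_{pp}$ is a priori defined on smooth densities and extended by continuity, so I would note that the diagonal action on the dense span of the $b_{p,nm}$ determines the bounded extension. I do not expect a real obstacle. The main subtlety is the nonvanishing of $\lambda_n^{(p)}$ at low degrees, handled via Assumption~\ref{ass:nonresonance} and the Wronskian. Bijectivity follows because the diagonal inverse above is a two-sided inverse on coefficient sequences.
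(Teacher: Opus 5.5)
Your proposal is correct and follows essentially the same route as the paper: you diagonalize $V_{pp}$ in the basis $b_{p,nm}$, use \eqref{eq:lambda-asymptotic} for large $n$ and Assumption~\ref{ass:nonresonance} for the finitely many remaining degrees to get $\abs{\lambda_n^{(p)}}\asymp\langle n\rangle^{-1}$, and conclude by Parseval in the spectral norm. Your version is slightly more complete than the paper's terse argument, since you also record the upper bound needed for boundedness of $V_{pp}:H^{t}\to H^{t+1}$ and justify $h_n^{(1)}(ka_p)\ne0$ via the Wronskian, both of which the paper leaves implicit.
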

	
	\begin{proof}
 Expand $f=\sum_{n,m}f_{nm}b_{p,nm}$. By equation \eqref{eq:self-eigenvalue}, we obtain
		\[
		V_{pp}^{-1}f=\sum_{n,m}\frac{f_{nm}}{\lambda_n^{(p)}}b_{p,nm}.
		\]
		Equation \eqref{eq:lambda-asymptotic} gives constants $N_p$ and $C_p$ for which
		$\abs{1/\lambda_n^{(p)}}\leq C_p\langle n\rangle$ whenever $n\geq N_p$.
		The finitely many remaining eigenvalues are nonzero by
	Assumption~\ref{ass:nonresonance}. Enlarging $C_p$ gives the same
		inequality for all $n$. Therefore
		\begin{align*}
			\norm{V_{pp}^{-1}f}_{H^{-1/2}(\Gamma_p)}^2
			&=\sum_{n,m}\langle n\rangle^{-1}
			\frac{\abs{f_{nm}}^2}{\abs{\lambda_n^{(p)}}^2}\\
			&\leq C_p^2\sum_{n,m}\langle n\rangle\abs{f_{nm}}^2
			= C_p^2\norm{f}_{H^{1/2}(\Gamma_p)}^2.
		\end{align*}
	The extension to arbitrary $t\in\R$ follows by the same argument.
	\end{proof}
	
	In contrast to the self-interaction blocks, all cross-interaction blocks are smooth.
	\begin{lemma}\label{lem:cross-smoothing}
		Under Assumption \ref{ass:nonresonance}, for $p\ne q$ and arbitrary $t\in\R$,
		\[
		V_{pq}:H^{-1/2}(\Gamma_q)\longrightarrow H^t(\Gamma_p),
		\qquad
		A_{pq}:H^{-1/2}(\Gamma_q)\longrightarrow H^t(\Gamma_p)
		\]
		are bounded. In particular, $A_{pq}$ is compact on $H^{-1/2}(\Gamma_q)$.
	\end{lemma}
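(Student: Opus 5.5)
We first show that $V_{pq}$ maps $H^{-1/2}(\Gamma_q)$ boundedly into $H^t(\Gamma_p)$ for every $t$. The statement for $A_{pq}$ then follows by composing with $V_{pp}^{-1}$ from Lemma~\ref{lem:self-inverse}, since $V_{pp}^{-1}:H^{t+1}(\Gamma_p)\to H^{t}(\Gamma_p)$ is bounded.

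For $\phi\in H^{-1/2}(\Gamma_q)$, the field $u:=\mathcal S_q\phi$ is the single-layer potential over $\Gamma_q$ alone. It solves the Helmholtz equation in $\R^3\setminus\overline{B_q}$, and by \eqref{eq:disjoint} this region contains the closed ball $\overline{B_p}$. Set $\delta:=d_{pq}-a_q-a_p>0$ and choose $a_p<\rho<\min\{d_{pq}-a_q,\,a_p+\delta/2\}$. The key point is the explicit representation
\[
V_{pq}\phi(\bm c_p+a_p\bm\omega)=\sum_{n,m}\alpha_{nm}\, j_n(ka_p)Y_n^m(\bm\omega),
\]
which comes from expanding $\Phi_k(\bm x,\bm y)$ for $\abs{\bm x-\bm c_p}<\abs{\bm y-\bm c_p}$ using the addition formula \eqref{eq:green-expansion}. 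The coefficients are
\[
\alpha_{nm}=\ii k\int_{\Gamma_q} h_n^{(1)}(k\abs{\bm y-\bm c_p})\overline{Y_n^m(\widehat{\bm y-\bm c_p})}\phi(\bm y)\dd s_{\bm y}.
\]
Equivalently, the regular expansion of $u$ about $\bm c_p$ converges on the larger ball $\abs{\bm x-\bm c_p}<\rho$, so that $\sum_{n,m}\abs{\alpha_{nm}}^2\abs{j_n(k\rho)}^2$ is finite. This quantity equals $a_p^{-2}$ times the squared $L^2$ norm of $u$ on the sphere of radius $\rho$ about $\bm c_p$, up to the normalization of the $b_{p,nm}$.

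Next we bound $\norm{u}_{L^2(\abs{\bm x-\bm c_p}=\rho)}$ by $C\norm{\phi}_{H^{-1/2}(\Gamma_q)}$. Since the kernel $\Phi_k(\bm x,\cdot)$ is smooth on $\Gamma_q$, uniformly for $\bm x$ on that sphere, the duality pairing gives
\[
\abs{u(\bm x)}\le \norm{\Phi_k(\bm x,\cdot)}_{H^{1/2}(\Gamma_q)}\norm{\phi}_{H^{-1/2}(\Gamma_q)},
\]
and the first factor is bounded uniformly in $\bm x$.

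The coefficients of $V_{pq}\phi$ in the basis $b_{p,nm}$ are $f_{nm}=a_p\,\alpha_{nm}j_n(ka_p)$. By \eqref{eq:j-asymptotic}, $\abs{j_n(ka_p)/j_n(k\rho)}\le C(a_p/\rho)^n$ for large $n$. Finitely many low $n$ might have $j_n(k\rho)=0$; this is the main technical nuisance. We avoid it by choosing $\rho$ so that $j_n(k\rho)\neq0$ for all $n$, which is possible because the zeros are countable. Alternatively, for those finitely many modes we bound $\alpha_{nm}$ directly by the duality estimate. Hence
\[
\norm{V_{pq}\phi}_{H^t}^2=\sum\langle n\rangle^{2t}\abs{f_{nm}}^2\le C\sup_n\langle n\rangle^{2t}(a_p/\rho)^{2n}\,\norm{u}^2_{L^2(\abs{\bm x-\bm c_p}=\rho)}\le C_t\norm{\phi}^2_{H^{-1/2}(\Gamma_q)}.
\]
The supremum is finite because $a_p/\rho<1$, so the geometric decay absorbs any polynomial weight.

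Finally, compactness follows because $A_{pq}$ maps $H^{-1/2}$ boundedly into $H^{1}$, and the embedding $H^1\hookrightarrow H^{-1/2}$ is compact, since $\langle n\rangle^{-3}\to0$ in the diagonal spectral norms. Here $A_{pq}$ is regarded as mapping $H^{-1/2}(\Gamma_q)$ into $H^{-1/2}(\Gamma_p)$.
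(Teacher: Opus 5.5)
Your proof is correct, but it takes a different route from the paper.

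\textbf{The paper's route.} The paper handles $V_{pq}$ in one line. Strict separation gives $\abs{\bm x-\bm y}\ge d_{pq}-a_p-a_q>0$ on $\Gamma_p\times\Gamma_q$, so the kernel is $C^\infty$. It then invokes the general fact that an integral operator with smooth kernel maps every Sobolev space on $\Gamma_q$ into every Sobolev space on $\Gamma_p$. The claim for $A_{pq}$ follows from Lemma~\ref{lem:self-inverse}, and compactness is simply asserted.

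\textbf{Your route.} You work directly in the spectral norms that define $H^t(\Gamma_p)$. You expand $\mathcal S_q\phi$ in regular waves about $\bm c_p$ and control the coefficients at an intermediate radius $\rho>a_p$ by duality and Parseval. You then transfer them to $\Gamma_p$ through the ratio $j_n(ka_p)/j_n(k\rho)=\cO\bigl((a_p/\rho)^n\bigr)$. This is the same mechanism the paper uses later in Lemma~\ref{lem:analytic-local}.

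\textbf{Comparison.} The paper's version is shorter. However, it relies on an unstated smoothing theorem together with the equivalence of spectral and standard Sobolev norms. Your version is self-contained and gives more quantitative information: the coefficients of $V_{pq}\phi$ decay geometrically, so every $H^t$ bound is controlled by $\sup_n\langle n\rangle^{2t}(a_p/\rho)^{2n}$. You also actually prove compactness, via the compact diagonal embedding $H^1\hookrightarrow H^{-1/2}$, which the paper leaves implicit.

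\textbf{Points you only sketch.} The price of your route is some bookkeeping:
\begin{itemize}
\item You apply the distribution $\phi$ term by term to the addition series. This is legitimate because, for $\rho<d_{pq}-a_q$, the series converges in $C^\infty(\Gamma_q)$ uniformly in $\bm x$ on the sphere of radius $\rho$.
\item You choose $\rho$ off the countably many zeros of the functions $j_n(k\,\cdot)$.
\item The Parseval normalization is $\rho^{-2}$, not $a_p^{-2}$. This is harmless.
\end{itemize}
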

	
	\begin{proof}
		For $p\ne q$, strict separation implies
		$\abs{\bm{x}-\bm{y}}\geq d_{pq}-a_p-a_q>0$ for
		$(\bm{x},\bm{y})\in\Gamma_p\times\Gamma_q$, which means the kernel $\Phi_k(\bm{x},\bm{y})$ is smooth. An integral operator with a smooth kernel maps every distributional Sobolev space on $\Gamma_q$ into every Sobolev space on $\Gamma_p$. This proves the assertion for $V_{pq}$. The smoothing property for $A_{pq}$ follows from the definition $A_{pq}=V_{pp}^{-1}V_{pq}$ and Lemma \ref{lem:self-inverse}. 
	\end{proof}
	
	Assumption~\ref{ass:nonresonance}, together with the uniqueness of the
	exterior scattering problem~\cite{ColtonKress2019}, implies that
	\begin{equation}\label{eq:V-isomorphism}
		V:\bigoplus_{p=1}^MH^{-1/2}(\Gamma_p)
		\longrightarrow
		\bigoplus_{p=1}^MH^{1/2}(\Gamma_p)
	\end{equation}
	is an isomorphism. By Lemma~\ref{lem:self-inverse}, \(\bD^{-1}\) is also an
	isomorphism between the corresponding coefficient spaces. Consequently, we have the following proposition.
	\begin{proposition}\label{prop:W-invertible}
		Under Assumption~\ref{ass:nonresonance}, the operator
		\begin{equation}\label{eq:W-invertible}
			\bW=\bD^{-1}V=\Id+\bA:\seqsp^{-1/2}\longrightarrow\seqsp^{-1/2}
		\end{equation}
		is boundedly invertible.
	\end{proposition}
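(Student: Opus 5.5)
The plan is to write $\bW$ as a composition of two isomorphisms and read off bounded invertibility directly. By \eqref{eq:V-isomorphism}, $V$ is a bounded bijection from $\bigoplus_p H^{-1/2}(\Gamma_p)$ onto $\bigoplus_p H^{1/2}(\Gamma_p)$, and by the open mapping theorem its inverse is bounded. By Lemma~\ref{lem:self-inverse}, each $V_{pp}$ is an isomorphism $H^{-1/2}(\Gamma_p)\to H^{1/2}(\Gamma_p)$. Hence the block-diagonal operator $\bD$ is an isomorphism of $\seqsp^{-1/2}$ onto $\bigoplus_p H^{1/2}(\Gamma_p)$, and $\bD^{-1}$ is bounded in the reverse direction with norm at most $\max_p C_p$, using \eqref{eq:self-inverse-bound}. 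Since the spectral norms \eqref{eq:single-sphere-energy-norm} are equivalent to the standard Sobolev norms for fixed radii, there is no ambiguity in the spaces involved.

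First I check boundedness of $\bW$ on $\seqsp^{-1/2}$. The diagonal part is $\Id$. Each off-diagonal block $A_{pq}$ is bounded $H^{-1/2}(\Gamma_q)\to H^{-1/2}(\Gamma_p)$ by Lemma~\ref{lem:cross-smoothing} with $t=-1/2$. With finitely many blocks, $\bW$ is therefore bounded. Alternatively, this follows from $\bW=\bD^{-1}V$ together with the boundedness of both factors.

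Next I verify the identity $\bD^{-1}V=\Id+\bA$ blockwise: $(\bD^{-1}V)_{pq}=V_{pp}^{-1}V_{pq}$, which equals $\Id$ when $p=q$ and $A_{pq}$ otherwise, as in \eqref{eq:A-block}. The inverse is then $\bW^{-1}=V^{-1}\bD$, which is a composition of bounded maps $\seqsp^{-1/2}\to\bigoplus_pH^{1/2}(\Gamma_p)\to\seqsp^{-1/2}$. It is a two-sided inverse because $\bD^{-1}VV^{-1}\bD=\Id$ and $V^{-1}\bD\bD^{-1}V=\Id$. The bound is $\|\bW^{-1}\|\le\|V^{-1}\|\,\|\bD\|$, where $\|\bD\|\le\max_p\sup_n|\lambda_n^{(p)}|\langle n\rangle$, which is finite by \eqref{eq:lambda-asymptotic}.

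There is essentially no obstacle beyond the isomorphism \eqref{eq:V-isomorphism}, which is the paper's stated input. Justifying that input would be the only substantive step. Injectivity of $V$ comes from exterior uniqueness combined with the nonresonance assumption, which gives interior Dirichlet uniqueness and hence, via the jump relations, $\phi=0$. Surjectivity follows from Fredholmness of index zero, since $V$ differs from a coercive operator by a compact one. The compactness of $\bA$ from Lemma~\ref{lem:cross-smoothing} offers an independent route: $\Id+\bA$ is Fredholm of index zero on $\seqsp^{-1/2}$, so injectivity, which is inherited from $V$, already suffices.
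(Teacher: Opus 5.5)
Your proposal is correct and follows the paper's argument: $\bW=\bD^{-1}V$ is the composition of the isomorphism \eqref{eq:V-isomorphism}, which the paper obtains from exterior uniqueness plus nonresonance by citing Colton--Kress, with the block-diagonal isomorphism $\bD^{-1}$ from Lemma~\ref{lem:self-inverse}. Your additional details are sound: the explicit inverse $V^{-1}\bD$, the bound through $\sup_n\langle n\rangle|\lambda_n^{(p)}|$, and the sketched justifications of \eqref{eq:V-isomorphism}, including the Fredholm route via compactness of $\bA$.
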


	Although the MEM can compute the multiple scattering solution exactly, in practice, the infinite expansion must be truncated.
	Let $P_{p,N}$ retain the modes $0\leq n\leq N$ on sphere $p$, put
	$Q_{p,N}:=I-P_{p,N}$, and set
	\begin{equation}\label{eq:projectors}
		\PN:=\operatorname{diag}(P_{1,N},\ldots,P_{M,N}),
		\qquad
		\QN:=\Id-\PN.
	\end{equation}
	The truncated space has dimension $M(N+1)^2$. The degree-$N$ MEM seeks $\Phi_N\in\operatorname{Ran}\PN$ such that
	\begin{equation}\label{eq:finite-system}
		\PN(\Id+\bA)\PN\Phi_N=\PN G.
	\end{equation}
	 Define the operator in $ \seqsp^{-1/2}$:
	\begin{equation}\label{eq:WN}
		\bW_N:=\Id+\PN\bA\PN.
	\end{equation}
	Then \eqref{eq:finite-system} is equivalent to
	$\bW_N\Phi_N=\PN G$ together with $\QN\Phi_N=0$.
	
	In the end, the convergence problem is to determine whether the solution $\Phi_N$ of the truncated system \eqref{eq:finite-system} converges to the solution $\Phi$ of the full system \eqref{eq:infinite-system} and to quantify the error $\norm{\Phi-\Phi_N}_{\seqsp^{-1/2}}$ in terms of the truncation order $N$.
	
	
	\section{Mathematical apparatus  }\label{sec:preparations}
	
	This section develops the mathematical tools needed to analyze the truncation error. In particular, Proposition~\ref{prop:error-identity} reduces the convergence analysis to the estimates of two tails: the discarded incident data tail $\QN G$ and the discarded interaction operator tail $\bA-\PN\bA\PN$. The remainder of this section establishes the required special-function estimates and, in particular, shows that angular multiplicities, Sobolev weights, and derivatives contribute only polynomial prefactors.
   
   In the following, $C$ denotes a positive constant that may depend on the fixed wavenumber, the sphere configuration, and the fixed Sobolev indices under consideration, but is independent of all angular degrees and of the truncation parameter $N$. Its value may change from one occurrence to the next.

	
	\begin{proposition}\label{prop:error-identity}
		Let $N$ be sufficiently large such that $\bW_N$ is invertible on $\seqsp^{-1/2}$,
		and let $\Phi_N$ solve \eqref{eq:finite-system}. Extend $\Phi_N$ by zero
		on $\QN(\seqsp^{-1/2})$. Then
		\begin{equation}\label{eq:error-identity}
			\Phi-\Phi_N
			=\bW_N^{-1}
			\left[
			\QN G-(\bA-\PN\bA\PN)\Phi
			\right].
		\end{equation}
		Consequently, 
		\begin{eqnarray}\label{eq:abstract-error-bound}
			&&\norm{\Phi-\Phi_N}_{ \seqsp^{-1/2}}\notag\\
			\leq
			&&\norm{\bW_N^{-1}}_{\seqsp^{-1/2}\to\seqsp^{-1/2}}
			\left(
			\norm{\QN G}_{ \seqsp^{-1/2}}
			+\norm{\bA-\PN\bA\PN}_{\seqsp^{-1/2}\to\seqsp^{-1/2}}
			\norm{\Phi}_{ \seqsp^{-1/2}}
			\right).
		\end{eqnarray}
	\end{proposition}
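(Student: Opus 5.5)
The identity \eqref{eq:error-identity} is pure algebra, so I will verify it by applying $\bW_N$ to $\Phi-\Phi_N$ and checking that the result is the bracketed right-hand side. Invertibility of $\bW_N$ then gives the identity. The bound \eqref{eq:abstract-error-bound} follows from the triangle inequality and the definition of operator norms.

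First compute $\bW_N\Phi$ for the exact solution. Since $(\Id+\bA)\Phi=G$,
\[
\bW_N\Phi=\Phi+\PN\bA\PN\Phi=G-\bA\Phi+\PN\bA\PN\Phi=G-(\bA-\PN\bA\PN)\Phi .
\]
Next compute $\bW_N\Phi_N$ for the truncated solution. As noted after \eqref{eq:WN}, the system \eqref{eq:finite-system} together with the zero extension $\QN\Phi_N=0$ gives $\Phi_N=\PN\Phi_N$. Hence
\[
\bW_N\Phi_N=\PN\Phi_N+\PN\bA\PN\Phi_N=\PN(\Id+\bA)\PN\Phi_N=\PN G .
\]
I will check this equivalence carefully, because $\bW_N$ acts as the identity on $\operatorname{Ran}\QN$. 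That means $\QN\bW_N\Phi_N=\QN\Phi_N=0$, which is consistent with $\QN\PN G=0$.

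Subtracting the two computations gives
\[
\bW_N(\Phi-\Phi_N)=G-\PN G-(\bA-\PN\bA\PN)\Phi=\QN G-(\bA-\PN\bA\PN)\Phi,
\]
using $\Id-\PN=\QN$. Applying $\bW_N^{-1}$, which exists by hypothesis, yields \eqref{eq:error-identity}.

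For the estimate, take $\seqsp^{-1/2}$ norms of both sides of \eqref{eq:error-identity}. Bound $\norm{\bW_N^{-1}x}$ by $\norm{\bW_N^{-1}}\,\norm{x}$, then use the triangle inequality, and finally bound $\norm{(\bA-\PN\bA\PN)\Phi}$ by the operator norm times $\norm{\Phi}$. The only point to justify is that all of these quantities are finite. $\bA$ is bounded on $\seqsp^{-1/2}$ by \cref{lem:cross-smoothing}, and $\PN$ is an orthogonal projector in the weighted norm. $G\in\seqsp^{-1/2}$ holds because $u^{\inc}|_\Gamma$ is smooth on each sphere (for the point source, $\bm x_0\in\Omega^+$), so $G$ lies in the space by \cref{lem:self-inverse}.

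There is no real obstacle here; the only place requiring care is the bookkeeping for $\Phi_N=\PN\Phi_N$ in the step $\bW_N\Phi_N=\PN G$.
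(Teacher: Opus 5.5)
Your proposal is correct and matches the paper's proof: apply $\bW_N$ to $\Phi-\Phi_N$, use $\bW\Phi=G$ and $\bW_N\Phi_N=\PN G$ to obtain $\QN G-(\bA-\PN\bA\PN)\Phi$, invert $\bW_N$, and take norms. Your explicit check that $\bW_N\Phi_N=\PN G$ follows from the zero extension $\Phi_N=\PN\Phi_N$, and your remarks that $\bA$, $G$ and $\Phi$ are finite in $\seqsp^{-1/2}$, are details the paper leaves implicit.
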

	
	\begin{proof}
		The infinite and truncated systems are
		$\bW\Phi=G$ and $\bW_N\Phi_N=\PN G$, respectively. Therefore, it holds
		\begin{align*}
			\bW_N(\Phi-\Phi_N)
			&=\Phi+\PN\bA\PN\Phi-\PN G\\
			&=G-\bA\Phi+\PN\bA\PN\Phi-\PN G\\
			&=\QN G-(\bA-\PN\bA\PN)\Phi.
		\end{align*}
		Left multiplication by $\bW_N^{-1}$ proves
		\eqref{eq:error-identity}. Taking norms gives equation
		\eqref{eq:abstract-error-bound}.
	\end{proof}
	
	The invertibility of $\bW_N$ for sufficiently large $N$ will be
	established in Lemma~\ref{lem:stability}, once the interaction-tail estimate
	is available. The exact identity~\eqref{eq:error-identity} shows that the proof
	of the full convergence theorem requires separate estimates of $\QN G$ and
	$\bA-\PN\bA\PN$. The following technical lemma will be used repeatedly in
	deriving estimates for these two tails.

	\begin{lemma}\label{lem:poly-tail}
		Let $0<\theta<1$ and $\alpha\in\R$. Then
		\begin{equation}\label{eq:poly-tail}
			\lim_{N\to\infty}
			\left(\sum_{n>N}\langle n\rangle^{\alpha}\theta^{2n}\right)^{1/(2N)}
			=\theta.
		\end{equation}
		Moreover, for every $\eta$ satisfying $\theta<\eta<1$, there exists a constant $C>0$ such that
		\begin{equation}\label{bbou}
			\left(
			\sum_{n>N} \langle n\rangle^{\alpha}\theta^{2n}
			\right)^{1/2}
			\leq
			C\eta^N
		\end{equation}
		for every $N\geq1$.
	\end{lemma}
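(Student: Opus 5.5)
The plan is to squeeze the tail sum $S_N:=\sum_{n>N}\langle n\rangle^{\alpha}\theta^{2n}$ between two explicit quantities whose $(2N)$-th roots both tend to $\theta$. Throughout we use that $\langle n\rangle=(1+n(n+1))^{1/2}$ lies between $n$ and $n+1$ for $n\geq1$. Hence $\langle n\rangle^{\alpha}$ is comparable to $(n+1)^{\alpha}$, with constants depending only on $\alpha$.

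\emph{Lower bound.} Keep only the first term, $n=N+1$:
\[
S_N\geq \langle N+1\rangle^{\alpha}\theta^{2N+2}.
\]
Taking the $(2N)$-th root gives $\langle N+1\rangle^{\alpha/(2N)}\theta^{1+1/N}$. Since $\langle N+1\rangle^{\alpha/(2N)}=\exp\bigl(\alpha\log\langle N+1\rangle/(2N)\bigr)\to1$, we get $\liminf_{N\to\infty} S_N^{1/(2N)}\geq\theta$.

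\emph{Upper bound.} Fix any $\eta$ with $\theta<\eta<1$. Since $\langle n\rangle^{\alpha}(\theta/\eta)^{2n}\to0$ (polynomial growth against geometric decay), there is $C_\eta$ with $\langle n\rangle^{\alpha}\theta^{2n}\leq C_\eta\eta^{2n}$ for all $n\geq0$. Summing the geometric series gives
\[
S_N\leq C_\eta\sum_{n>N}\eta^{2n}=C_\eta\frac{\eta^{2N+2}}{1-\eta^2}\leq C\eta^{2N}.
\]
This is exactly \eqref{bbou} after taking square roots. It also gives $\limsup_{N\to\infty} S_N^{1/(2N)}\leq\eta$. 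Letting $\eta\downarrow\theta$ yields $\limsup\leq\theta$, and combined with the lower bound this proves \eqref{eq:poly-tail}.

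There is no real obstacle here. The only point needing care is the case $\alpha<0$ in the lower bound, where $\langle N+1\rangle^{\alpha/(2N)}\to1$ still holds because the exponent $\alpha\log\langle N+1\rangle/(2N)$ tends to $0$ whatever the sign of $\alpha$. The other is the uniformity of $C_\eta$ in the upper bound. That follows because a convergent sequence is bounded: $\sup_n\langle n\rangle^{\alpha}(\theta/\eta)^{2n}<\infty$.
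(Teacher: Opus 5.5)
Your proof is correct and follows the paper's argument. It uses the same single-term lower bound at $n=N+1$ and the same domination of $\langle n\rangle^{\alpha}\theta^{2n}$ by a multiple of $\eta^{2n}$ followed by a geometric sum. The only cosmetic difference is how the constant is obtained. You get a uniform constant $C_\eta$ from the boundedness of the convergent sequence $\langle n\rangle^{\alpha}(\theta/\eta)^{2n}$, whereas the paper uses a threshold $N_0$ and then enlarges the constant to cover the finitely many $N<N_0$.
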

	
	\begin{proof}
		Fix $\eta$ with $\theta<\eta<1$. Since exponential growth dominates polynomial growth, there exists $N_0\geq1$ such that
		$\langle n\rangle^{\alpha}\leq(\eta/\theta)^{2n}$ for all $n\geq N_0$. Thus, for $N\geq N_0$,
		\[
		\sum_{n>N}\langle n\rangle^{\alpha}\theta^{2n}
		\leq\sum_{n>N}\eta^{2n}
		=\frac{\eta^{2(N+1)}}{1-\eta^2}.
		\]
		After enlarging the constant to cover the finitely many indices $1\leq N<N_0$, this proves \eqref{bbou}. Taking $2N$-th roots gives
		\[
		\limsup_{N\to\infty}
		\left(\sum_{n>N}\langle n\rangle^{\alpha}\theta^{2n}\right)^{1/(2N)}
		\leq\eta.
		\]
		Letting $\eta\downarrow\theta$ yields the upper bound $\theta$. Conversely, retaining only the term with index $N+1$ gives
		\[
		\left(\sum_{n>N}\langle n\rangle^{\alpha}\theta^{2n}\right)^{1/(2N)}
		\geq
		\langle N+1\rangle^{\alpha/(2N)}\theta^{1+1/N},
		\]
		whose limit is $\theta$. This proves \eqref{eq:poly-tail}.
	\end{proof}

	Next, we recall the large-order behavior of the Bessel functions and their derivatives.
 The asymptotics \eqref{eq:j-asymptotic}--\eqref{eq:h-asymptotic} imply that, for fixed $0<a<R$, 
	\begin{align}
		\lim_{n\to\infty}
		\abs{\frac{h_n^{(1)}(kR)}{h_n^{(1)}(ka)}}^{1/n}
		&=\frac{a}{R},\label{eq:h-ratio-root}\\
		\lim_{n\to\infty}
		\abs{j_n(ka)h_n^{(1)}(kR)}^{1/n}
		&=\frac{a}{R}.\label{eq:jh-root}
	\end{align}
	The convergence is uniform when $a$ and $R$ range over compact sets satisfying $0<a<R$. In addition, the derivatives satisfy
	\begin{equation}\label{eq:h-derivative-recurrence}
		\frac{\dd}{\dd z}h_n^{(1)}(z)
		=h_{n-1}^{(1)}(z)-\frac{n+1}{z}h_n^{(1)}(z).
	\end{equation}
	For a given sphere $\Gamma_q$, let $r=\abs{\bm{x}-\bm{c}_q}$ and $\widehat{\bm r}=(\bm{x}-\bm{c}_q)/r$. Since the radial and tangential parts are orthogonal, it holds
	\[
	\nabla\bigl(f(r)Y_n^m(\widehat{\bm r})\bigr)
	=f'(r)Y_n^m(\widehat{\bm r})\widehat{\bm r}
	+\frac{f(r)}{r}\nabla_{\Sph}Y_n^m(\widehat{\bm r}).
	\]
	Combining this identity with
	\eqref{eq:value-shell}--\eqref{eq:gradient-shell} in the Appendix yields
	\begin{align}
		\sum_{m=-n}^n
		\abs{h_n^{(1)}(kr)Y_n^m(\widehat{\bm r})}^2
		&=\frac{2n+1}{4\pi}\abs{h_n^{(1)}(kr)}^2,
		\label{eq:h-value-shell}\\
		\sum_{m=-n}^n
		\abs{\nabla\bigl(h_n^{(1)}(kr)Y_n^m(\widehat{\bm r})\bigr)}^2
		&=\frac{2n+1}{4\pi}
		\left(
		k^2\abs{h_n^{(1)\prime}(kr)}^2
		+\frac{n(n+1)}{r^2}\abs{h_n^{(1)}(kr)}^2
		\right).
		\label{eq:h-gradient-shell}
	\end{align}
	The same formulas hold with $h_n^{(1)}$ replaced by $j_n$.
	
	The following lemma provides the fundamental estimates for the convergence analysis. It
	converts the sum over all orders $m$ in a degree-\(n\) multipole family into physical value and gradient energies,
	so that only the estimate with respect to the degree $n$ remains.
	
	\begin{lemma}\label{lem:differentiated-shell}
		   Let $\bm{c}_q$ and $a_q$ denote the center and radius of sphere $\Gamma_q$, respectively, and set $K:=\Gamma_p$ for some $p\ne q$. For $\bm{x}\in K$, define
		\[
			r_q(\bm{x}):=\abs{\bm{x}-\bm{c}_q},
			\qquad
			\widehat{\bm r}_q(\bm{x}):=\frac{\bm{x}-\bm{c}_q}{r_q(\bm{x})}.
		\]
		Assume that, for all $\bm{x}\in K$,
		\[
			R_-\leq r_q(\bm{x})\leq R_+,
			\qquad R_->a_q.
		\]
		For every $\eta$ satisfying $a_q/R_-<\eta<1$, there exist constants $C>0$ and
		$\mu\geq0$, independent of $n$, such that for all $n\ge 1$,
		\begin{equation}\label{eq:differentiated-shell}
			\sum_{m=-n}^n
			\norm{j_n(ka_q)h_n^{(1)}(kr_q)
				Y_n^m(\widehat{\bm r}_q)}_{H^{1/2}(K)}^2
			\leq C\langle n\rangle^\mu\eta^{2n},
		\end{equation}
		and
		\begin{equation}\label{eq:differentiated-ratio}
			\sum_{m=-n}^n
			\norm{
				\frac{h_n^{(1)}(kr_q)}{h_n^{(1)}(ka_q)}
				Y_n^m(\widehat{\bm r}_q)
			}_{H^{1/2}(K)}^2
			\leq C\langle n\rangle^\mu\eta^{2n}.
		\end{equation}
	\end{lemma}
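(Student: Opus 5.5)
The plan is to bound the $H^{1/2}(K)$ norm by an $H^1(K)$ norm. Since $K=\Gamma_p$ is a fixed smooth compact surface, $\norm{g}_{H^{1/2}(K)}^2\le C\norm{g}_{H^1(K)}^2\le C\bigl(\norm{g}_{L^2(K)}^2+\norm{\nabla_K g}_{L^2(K)}^2\bigr)$. For $g$ the trace of a function $F$ smooth near $K$, the tangential gradient satisfies $|\nabla_K g|\le|\nabla F|$ pointwise.

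So for each degree $n$ we take $F_{m}(\bm x)=f_n(r_q)Y_n^m(\widehat{\bm r}_q)$, with $f_n(r)=j_n(ka_q)h_n^{(1)}(kr)$ for \eqref{eq:differentiated-shell} and $f_n(r)=h_n^{(1)}(kr)/h_n^{(1)}(ka_q)$ for \eqref{eq:differentiated-ratio}. Both are smooth in a neighborhood of $K$, because $r_q\ge R_->a_q>0$ there. We sum over $m$ before integrating over $K$, and use the shell identities \eqref{eq:h-value-shell}--\eqref{eq:h-gradient-shell} pointwise (the constant factor $j_n(ka_q)$ or $1/h_n^{(1)}(ka_q)$ just multiplies them). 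This gives
\[
\sum_{m=-n}^n\norm{F_m}_{H^{1/2}(K)}^2\le C\,\abs{K}\,\frac{2n+1}{4\pi}\sup_{R_-\le r\le R_+}\Bigl(\abs{f_n(r)}^2+k^2\abs{f_n'(r)}^2+\frac{n(n+1)}{r^2}\abs{f_n(r)}^2\Bigr).
\]
All order-multiplicity is thereby absorbed into the factor $2n+1$, and only one-variable radial estimates remain.

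The radial estimates come from the derivative recurrence \eqref{eq:h-derivative-recurrence}. It gives $\abs{h_n^{(1)\prime}(kr)}\le\abs{h_{n-1}^{(1)}(kr)}+\frac{n+1}{kR_-}\abs{h_n^{(1)}(kr)}$. Next we use the asymptotic \eqref{eq:h-asymptotic}: for large $n$, $\abs{h_{n-1}^{(1)}(z)}/\abs{h_n^{(1)}(z)}\sim z/(2n-1)\le C$, uniformly for $z$ in the compact set $[kR_-,kR_+]$. So $\abs{f_n'}\le C\langle n\rangle\abs{f_n}$, and the bracket is at most $C\langle n\rangle^2\sup_r\abs{f_n(r)}^2$.

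The main point is then the geometric bound $\sup_{r\in[R_-,R_+]}\abs{f_n(r)}\le C\eta^n$. For large $n$, $\abs{h_n^{(1)}(z)}$ is decreasing in $z$ on compacta; indeed the asymptotic $(2n-1)!!/z^{n+1}$ holds uniformly on compact $z$-intervals. The supremum over $r$ is therefore comparable to the value at $r=R_-$, up to a factor $1+\cO(n^{-1})$. Applying \eqref{eq:jh-root} and \eqref{eq:h-ratio-root} with $a=a_q$ and $R=R_-$, both with the stated uniformity, the $n$-th roots tend to $a_q/R_-<\eta$. Hence $\abs{f_n(r)}\le\eta^n$ for $n\ge N_0$, uniformly in $r$.

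For $1\le n<N_0$, each term is finite. For \eqref{eq:differentiated-ratio} this uses $h_n^{(1)}(ka_q)\ne0$, which always holds because $j_n$ and $y_n$ have no common real zeros. Enlarging $C$ then covers these finitely many $n$. Collecting the factors gives the bound with $\mu=3$.

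The delicate step is the uniformity in $r$ of the Bessel asymptotics. I would handle it by quoting the uniform version of \eqref{eq:h-asymptotic} on compact sets, as stated after \eqref{eq:jh-root}, rather than by monotonicity arguments.
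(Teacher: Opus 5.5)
Your proposal is correct and is essentially the paper's argument. You bound the $H^{1/2}(K)$ norm by the $H^1(K)$ norm and the tangential gradient by the full gradient, sum over orders pointwise with \eqref{eq:h-value-shell}--\eqref{eq:h-gradient-shell}, control the radial factors uniformly on $[R_-,R_+]$ via \eqref{eq:jh-root}, \eqref{eq:h-ratio-root} and the recurrence \eqref{eq:h-derivative-recurrence}, and absorb finitely many degrees into $C$. The only differences are cosmetic: you make the derivative bound $\abs{h_n^{(1)\prime}}\le C\langle n\rangle\abs{h_n^{(1)}}$ and $\mu=3$ explicit, and you note $h_n^{(1)}(ka_q)\neq0$ for the low degrees, which the paper leaves implicit.
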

	
	\begin{proof}
		Fix
		\[
			\frac{a_q}{R_-}<\theta<\eta.
		\]
		The uniform large-order estimate \eqref{eq:jh-root} and the recurrence
		\eqref{eq:h-derivative-recurrence} imply that, for some $L\geq0$, \(n\ge N_0,\)
		\begin{align}
			\sup_{R_-\leq r\leq R_+}
			\abs{j_n(ka_q)h_n^{(1)}(kr)}
			&\leq C\langle n\rangle^L\theta^n,
			\label{eq:degree-value-uniform}\\
			\sup_{R_-\leq r\leq R_+}
			\abs{j_n(ka_q)k h_n^{(1)\prime}(kr)}
			&\leq C\langle n\rangle^{L+1}\theta^n.
			\label{eq:degree-gradient-uniform}
		\end{align}
        Enlarging \(C\), if necessary, absorbs the finitely many degrees below the large-order threshold, so the estimates hold for all \(n\ge1\).
		Set
		\[
			u_{nm}(\bm{x}):=j_n(ka_q)h_n^{(1)}(kr_q(\bm{x}))
			Y_n^m(\widehat{\bm r}_q(\bm{x})),
			\qquad \bm{x}\in K.
		\]
		By equation \eqref{eq:h-value-shell}, we have
		\begin{align}
			\sum_{m=-n}^n\abs{u_{nm}(\bm{x})}^2
			&=\frac{2n+1}{4\pi}
			\abs{j_n(ka_q)h_n^{(1)}(kr_q(\bm{x}))}^2\notag\\
			&\leq C\langle n\rangle^{2L+1}\theta^{2n}.
			\label{eq:degree-value-pointwise}
		\end{align}
		Likewise, equation \eqref{eq:h-gradient-shell} gives
		\begin{align}
			\sum_{m=-n}^n\abs{\nabla u_{nm}(\bm{x})}^2
			&=\abs{j_n(ka_q)}^2\frac{2n+1}{4\pi}
			\left[
			k^2\abs{h_n^{(1)\prime}(kr_q(\bm{x}))}^2
			+\frac{n(n+1)}{r_q(\bm{x})^2}
			\abs{h_n^{(1)}(kr_q(\bm{x}))}^2
			\right]\notag\\
			&\leq C\langle n\rangle^{2L+3}\theta^{2n},
			\qquad \bm{x}\in K.
			\label{eq:degree-gradient-pointwise}
		\end{align}
		Since $\abs{\nabla_K u_{nm}}\leq\abs{\nabla u_{nm}}$, where $\nabla_K$
		denotes the tangential gradient on $K$, integration over $K$ yields
		\begin{align*}
			\sum_{m=-n}^n\norm{u_{nm}}_{H^1(K)}^2
			&\leq C\sup_{\bm{x}\in K}
			\sum_{m=-n}^n
			\left(\abs{u_{nm}(\bm{x})}^2+\abs{\nabla u_{nm}(\bm{x})}^2\right)\\
			&\leq C\langle n\rangle^{2L+3}\theta^{2n}
			\leq C\langle n\rangle^{\mu}\eta^{2n}.
		\end{align*}
		The continuous embedding $H^1(K)\hookrightarrow H^{1/2}(K)$ then proves
		\eqref{eq:differentiated-shell}.
		
		The estimate \eqref{eq:differentiated-ratio} follows by the same argument.
		Indeed, the uniform ratio estimate \eqref{eq:h-ratio-root}, together with
		\eqref{eq:h-derivative-recurrence}, gives
		\begin{align*}
			\sup_{R_-\leq r\leq R_+}
			\abs{\frac{h_n^{(1)}(kr)}{h_n^{(1)}(ka_q)}}
			&\leq C\langle n\rangle^L\theta^n,\\
			\sup_{R_-\leq r\leq R_+}
			\abs{\frac{k h_n^{(1)\prime}(kr)}{h_n^{(1)}(ka_q)}}
			&\leq C\langle n\rangle^{L+1}\theta^n.
		\end{align*}
	\end{proof}
	
	The geometric meaning of Lemma~\ref{lem:differentiated-shell} is clear.
		At each degree $n$, the addition identities (see Appendix \ref{app:spherical-identities}) first sum all $2n+1$
		orders exactly. Separation of the centers then contributes the
		geometric ratio $a_q/R_-$, whereas
		order multiplicity, derivatives, and Sobolev weights contribute
		only powers of $n$. Thus the lemma turns the coupled degree and order
		translation problem into a single large-degree estimate without ever
		bounding individual Gaunt coefficients. This observation is central to
		the convergence analysis below.

	\section{Full convergence analysis}\label{sec:full-analysis}
	
	By Proposition~\ref{prop:error-identity}, the error decomposes into two
	distinct contributions. In this section, we first estimate the truncation
	tail of the preconditioned interaction operator $\bA-\PN\bA\PN$ and then the incident-data
	tail $\QN G$. Combining these estimates with uniform stability yields the full convergence theorem.
	
	In order to estimate the interaction truncation error $\bA-\PN\bA\PN$, we split it into target-side and source-side high-degree tails and bound them separately. We begin with the kernel representation of the interaction block. For $p\ne q$, denote the directed center-to-surface distances
	\begin{equation}\label{eq:directed-distances}
		R_{pq}^{\rm tar}:=d_{pq}-a_q,
		\qquad
		R_{pq}^{\rm src}:=d_{pq}-a_p.
	\end{equation}
	Strict disjointness gives $R_{pq}^{\rm tar}>a_p$ and
	$R_{pq}^{\rm src}>a_q$. 
	
	\begin{proposition}
		\label{prop:interaction-kernel}
		Under Assumption \ref{ass:nonresonance},  choose any two spheres $\Gamma_p$ and $\Gamma_q$ with $p\neq q$.  For $\bm{y}\in\Gamma_q$, define
		\[
		R_p(\bm{y}):=|\bm{y}-\bm{c}_p|,
		\qquad
		\widehat{\bm R}_p(\bm{y}):=\frac{\bm{y}-\bm{c}_p}{R_p(\bm{y})}.
		\]
		Then, for every $\varphi_q\in H^{-1/2}(\Gamma_q)$, the $(n,m)$-th
		coefficient of $A_{pq}\varphi_q$, where
		$A_{pq}:=V_{pp}^{-1}V_{pq}$, with respect to the normalized basis
		\[
		b_{p,nm}(\bm{x})
		=
		\frac1{a_p}
		Y_n^m\!\left(\widehat{\bm{x}-\bm{c}_p}\right)
		\]
		is
		\[
		(A_{pq}\varphi_q)_{nm}
		=
		\int_{\Gamma_q}
		F_{p,nm}(\bm{y})\varphi_q(\bm{y})\,\dd s_{\bm y},
		\]
		where
		\begin{equation}\label{eq:A-kernel}
			F_{p,nm}(\bm{y})
			=
			\frac{
				h_n^{(1)}(kR_p(\bm{y}))
			}{
				a_p h_n^{(1)}(ka_p)
			}
			\overline{
				Y_n^m\!\left(\widehat{\bm R}_p(\bm{y})\right)
			}.
		\end{equation}
	\end{proposition}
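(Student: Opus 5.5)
The plan is to compute the coefficient directly from the definitions. Two facts do the work: $V_{pp}^{-1}$ is diagonal in the basis $\{b_{p,nm}\}$, and the addition formula \eqref{eq:green-expansion} gives the Green kernel a separated expansion centred at $\bm c_p$. First, since the family $b_{p,nm}$ is orthonormal in $L^2(\Gamma_p)$ and \eqref{eq:self-eigenvalue} holds, we get
\[
(A_{pq}\varphi_q)_{nm}=\langle V_{pp}^{-1}V_{pq}\varphi_q,b_{p,nm}\rangle=\frac{1}{\lambda_n^{(p)}}\langle V_{pq}\varphi_q,b_{p,nm}\rangle .
\]
This division is legitimate under Assumption~\ref{ass:nonresonance}, where $\lambda_n^{(p)}\ne0$. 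By Lemma~\ref{lem:cross-smoothing}, $V_{pq}\varphi_q$ is smooth on $\Gamma_p$, so the pairing is an honest $L^2$ integral.

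Next, fix $\varphi_q$ smooth first and write $\langle V_{pq}\varphi_q,b_{p,nm}\rangle=\int_{\Gamma_p}\int_{\Gamma_q}\Phi_k(\bm x,\bm y)\varphi_q(\bm y)\,\overline{b_{p,nm}(\bm x)}\,\dd s_{\bm y}\dd s_{\bm x}$. The kernel is smooth on $\Gamma_p\times\Gamma_q$, so Fubini lets us swap the order and perform the $\bm x$-integral first for each fixed $\bm y\in\Gamma_q$.

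For $\bm x=\bm c_p+a_p\bm\omega$ we have $|\bm x-\bm c_p|=a_p<R_p(\bm y)$, since $R_p(\bm y)\ge d_{pq}-a_q>a_p$. The addition formula therefore gives
\[
\Phi_k(\bm x,\bm y)=\ii k\sum_{\ell,r}j_\ell(ka_p)h_\ell^{(1)}(kR_p(\bm y))\,Y_\ell^r(\bm\omega)\overline{Y_\ell^r(\widehat{\bm R}_p(\bm y))}.
\]
I will assume this is the paper's convention; it is symmetric up to conjugation placement, which I will check against \eqref{eq:self-eigenvalue}. The series converges uniformly in $\bm\omega$, so we may integrate term by term. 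Since $\dd s_{\bm x}=a_p^2\dd\bm\omega$ and $\overline{b_{p,nm}}=a_p^{-1}\overline{Y_n^m}$, orthonormality \eqref{eq:Y-orthogonal} kills every term except $(\ell,r)=(n,m)$. The $\bm x$-integral is therefore
\[
\ii k a_p\, j_n(ka_p)h_n^{(1)}(kR_p(\bm y))\overline{Y_n^m(\widehat{\bm R}_p(\bm y))}.
\]
Dividing by $\lambda_n^{(p)}=\ii k a_p^2 j_n(ka_p)h_n^{(1)}(ka_p)$ cancels the factors $\ii k$ and $j_n(ka_p)$ and leaves exactly $F_{p,nm}(\bm y)$ from \eqref{eq:A-kernel}.

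Finally, extend to $\varphi_q\in H^{-1/2}(\Gamma_q)$ by density. Both sides are continuous in $\varphi_q$: the left side because $A_{pq}$ is bounded (Lemma~\ref{lem:cross-smoothing}) and taking a fixed coefficient is continuous, and the right side because $F_{p,nm}$ is smooth on $\Gamma_q$, so the integral is a duality pairing with a function in $H^{1/2}(\Gamma_q)$.

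The main obstacle is bookkeeping rather than analysis. I must use the exact conjugation convention of \eqref{eq:green-expansion}, so that the surviving term carries $\overline{Y_n^m(\widehat{\bm R}_p)}$ and not $Y_n^m$, and I must confirm the normalization constant $\ii k$ consistently with \eqref{eq:self-eigenvalue}. Uniform convergence of the addition series for $|\bm x-\bm c_p|<R_p(\bm y)$, which is needed for the term-by-term integration, follows from the strict separation $R_p(\bm y)>a_p$ uniformly in $\bm y\in\Gamma_q$.
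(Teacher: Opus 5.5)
Your proposal is correct and follows the paper's proof essentially step by step. You expand the Green kernel about $\bm c_p$ with $r_<=a_p<R_p(\bm y)$, project onto $b_{p,nm}$ using orthonormality, and divide by $\lambda_n^{(p)}$ to cancel $\ii k a_p j_n(ka_p)$. The Fubini/uniform-convergence justification and the density extension to $\varphi_q\in H^{-1/2}(\Gamma_q)$ that you add are details the paper leaves implicit.
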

	
	\begin{proof}
		Since $\bm{y}\in\Gamma_q$ and the spheres are strictly disjoint, we have
		\[
		d_{pq}+a_q\ge R_p(\bm{y})\ge d_{pq}-a_q>a_p.
		\]
		Hence, for $\bm{x}\in\Gamma_p$, the addition formula for the Helmholtz
		fundamental solution gives
		\[
		\Phi_k(\bm{x},\bm{y})
		=
		ik
		\sum_{\ell=0}^{\infty}
		\sum_{s=-\ell}^{\ell}
		j_\ell(ka_p)
		h_\ell^{(1)}(kR_p(\bm{y}))
		Y_\ell^s\!\left(\widehat{\bm{x}-\bm{c}_p}\right)
		\overline{
			Y_\ell^s\!\left(\widehat{\bm R}_p(\bm{y})\right)
		}.
		\]
		Projecting $V_{pq}\varphi_q$ onto $b_{p,nm}$ and using
		\[
		\dd s_{\bm x}=a_p^2\,\dd\widehat{\bm x},
		\qquad
		\int_{\mathbb S^2}
		Y_\ell^s(\widehat{\bm x})
		\overline{Y_n^m(\widehat{\bm x})}
		\,\dd\widehat{\bm x}
		=
		\delta_{\ell n}\delta_{sm},
		\]
		yields
		\[
		(V_{pq}\varphi_q)_{nm}
		=
		ik\,a_p j_n(ka_p)
		\int_{\Gamma_q}
		h_n^{(1)}(kR_p(\bm{y}))
		\overline{
			Y_n^m\!\left(\widehat{\bm R}_p(\bm{y})\right)
		}
		\varphi_q(\bm{y})\,\dd s_{\bm y}.
		\]
		On the other hand, we have
		\[
		V_{pp}b_{p,nm}
		=
		\lambda_n^{(p)}b_{p,nm},
		\qquad
		\lambda_n^{(p)}
		=
		ik\,a_p^2
		j_n(ka_p)h_n^{(1)}(ka_p).
		\]
		Therefore, it holds
		\[
		(A_{pq}\varphi_q)_{nm}
		=
		\frac{(V_{pq}\varphi_q)_{nm}}
		{\lambda_n^{(p)}},
		\]
		and cancellation of $ik\,a_pj_n(ka_p)$ gives
		\[
		(A_{pq}\varphi_q)_{nm}
		=
		\int_{\Gamma_q}
		\frac{
			h_n^{(1)}(kR_p(\bm{y}))
		}{
			a_p h_n^{(1)}(ka_p)
		}
		\overline{
			Y_n^m\!\left(\widehat{\bm R}_p(\bm{y})\right)
		}
		\varphi_q(\bm{y})\,\dd s_{\bm y}.
		\]
	\end{proof}
	
	\begin{remark}
		A key analytic ingredient in the two-dimensional convergence proof given by \cite{Fitzpatrick2021} is a direct estimate
		of the translation coefficients. The lack of a tractable three-dimensional
		analogue is a principal obstacle to extending that argument.
		Proposition~\ref{prop:interaction-kernel} provides a substitute for such
		coefficient-wise estimates: all Gaunt couplings are resummed into the smooth
		kernel $F_{p,nm}$, after which the sum over the order $m$ is handled by the
		spherical harmonic addition theorem. A detailed comparison with the
		two-dimensional argument is given in Section~\ref{sec:first}.
	\end{remark}
	\begin{lemma}[Target-side high-degree interaction tail]\label{lem:target-tail}
		For $p\ne q$, it holds
		\begin{equation}\label{eq:target-tail-root}
			\limsup_{N\to\infty}
			\norm{Q_{p,N}A_{pq}}_{H^{-1/2}(\Gamma_q)\to H^{-1/2}(\Gamma_p)}^{1/N}
			\leq\frac{a_p}{d_{pq}-a_q}.
		\end{equation}
	\end{lemma}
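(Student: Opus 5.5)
The plan is to use the kernel representation of Proposition~\ref{prop:interaction-kernel} together with duality between $H^{-1/2}(\Gamma_q)$ and $H^{1/2}(\Gamma_q)$. For $\varphi_q\in H^{-1/2}(\Gamma_q)$, the coefficients of $Q_{p,N}A_{pq}\varphi_q$ are $(A_{pq}\varphi_q)_{nm}=\langle F_{p,nm},\overline{\varphi_q}\rangle_{\Gamma_q}$ for $n>N$; they vanish for $n\le N$. The duality pairing gives
\[
\abs{(A_{pq}\varphi_q)_{nm}}\leq C\norm{F_{p,nm}}_{H^{1/2}(\Gamma_q)}\norm{\varphi_q}_{H^{-1/2}(\Gamma_q)},
\]
where $C$ depends only on the equivalence of the spectral norm with the standard one on $\Gamma_q$. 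Consequently
\[
\norm{Q_{p,N}A_{pq}\varphi_q}_{H^{-1/2}(\Gamma_p)}^2
\leq C\norm{\varphi_q}_{H^{-1/2}(\Gamma_q)}^2\sum_{n>N}\langle n\rangle^{-1}\sum_{m=-n}^n\norm{F_{p,nm}}_{H^{1/2}(\Gamma_q)}^2 .
\]
This is a Hilbert--Schmidt-type bound: each coefficient is estimated separately, and then we sum over $m$ and $n$. It loses nothing at the level of $N$th roots.

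The key step is to bound the inner sum over orders degree-wise. This is exactly the situation of Lemma~\ref{lem:differentiated-shell}, estimate \eqref{eq:differentiated-ratio}, with the roles of the spheres swapped. The expansion center is now $\bm c_p$ with radius $a_p$, and the set $K$ is $\Gamma_q$. For $\bm y\in\Gamma_q$ we have $R_-:=d_{pq}-a_q\le R_p(\bm y)\le d_{pq}+a_q=:R_+$. Strict disjointness gives $R_-=R^{\rm tar}_{pq}>a_p$. Complex conjugation of $Y_n^m$ does not change the sum over $m$ of the moduli or gradients, since the addition identities \eqref{eq:h-value-shell}--\eqref{eq:h-gradient-shell} are invariant under it. The factor $1/a_p$ is a constant. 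Hence for every $\eta$ with $a_p/(d_{pq}-a_q)<\eta<1$,
\[
\sum_{m=-n}^n\norm{F_{p,nm}}_{H^{1/2}(\Gamma_q)}^2\leq C\langle n\rangle^\mu\eta^{2n}.
\]

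Inserting this into the previous bound gives
\[
\norm{Q_{p,N}A_{pq}}^2\leq C\sum_{n>N}\langle n\rangle^{\mu-1}\eta^{2n}.
\]
Lemma~\ref{lem:poly-tail}, applied with $\theta=\eta$, shows that the $N$th root of the right-hand side has limsup at most $\eta$. The constant $C^{1/N}$ tends to $1$. Letting $\eta\downarrow a_p/(d_{pq}-a_q)$ then yields \eqref{eq:target-tail-root}.

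The main point needing care is the transfer in Lemma~\ref{lem:differentiated-shell}: we must check that it applies with $K=\Gamma_q$ and center $\bm c_p$. The lemma was stated with generic roles, and its proof uses only the radial bounds $R_-\le r\le R_+$ and the ratio asymptotics \eqref{eq:h-ratio-root}, so this transfer is legitimate. The only other technicality is the duality constant between the spectral $H^{\pm1/2}$ norms, which is fixed and independent of $n$ and $N$.
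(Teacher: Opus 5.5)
Your proof is correct and follows essentially the paper's route. You reduce the operator norm to $\sum_{n>N}\langle n\rangle^{-1}\sum_{m=-n}^{n}\norm{F_{p,nm}}_{H^{1/2}(\Gamma_q)}^2$, apply the ratio estimate \eqref{eq:differentiated-ratio} of Lemma~\ref{lem:differentiated-shell} with center $\bm c_p$ and $K=\Gamma_q$, and finish with Lemma~\ref{lem:poly-tail}. The only cosmetic difference is how you reach that sum: the paper computes it exactly as the weighted Hilbert--Schmidt norm via Parseval over the source basis $e_{q,\ell r}$, whereas you bound each coefficient by duality and Cauchy--Schwarz, which gives the same quantity with constant $1$ in the spectral norms.
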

	
	\begin{proof}
		The vectors
		\[
			e_{q,\ell r}:=\langle \ell\rangle^{1/2}b_{q,\ell r},
			\qquad \ell\geq0,\quad -\ell\leq r\leq\ell,
		\]
		form an orthonormal basis of $H^{-1/2}(\Gamma_q)$. For an operator
		$T:H^{-1/2}(\Gamma_q)\to H^{-1/2}(\Gamma_p)$, define the weighted
		Hilbert--Schmidt norm by
		\[
			\norm{T}_{\HS(-1/2\to-1/2)}^2
			:=\sum_{\ell=0}^{\infty}\sum_{r=-\ell}^{\ell}
			\norm{Te_{q,\ell r}}_{H^{-1/2}(\Gamma_p)}^2.
		\]
		This definition is independent of the chosen orthonormal basis, and the
		Hilbert--Schmidt norm dominates the operator norm \cite{Conway1990}:
		\[
			\norm{T}_{H^{-1/2}(\Gamma_q)\to H^{-1/2}(\Gamma_p)}
			\leq \norm{T}_{\HS(-1/2\to-1/2)}.
		\]
		We prove the stronger estimate in this norm. By equation
		\eqref{eq:A-kernel}, Tonelli's theorem \cite{Folland1999}, and Parseval's identity on
		$\Gamma_q$, we obtain
		\begin{align}
			\norm{Q_{p,N}A_{pq}}_{\HS(-1/2\to-1/2)}^2
			&=\sum_{\ell=0}^{\infty}\sum_{r=-\ell}^{\ell}
			\norm{Q_{p,N}A_{pq}e_{q,\ell r}}_{H^{-1/2}(\Gamma_p)}^2\notag\\
			&=\sum_{\ell=0}^{\infty}\sum_{r=-\ell}^{\ell}
			\langle\ell\rangle
			\sum_{n>N}\sum_{m=-n}^{n}\langle n\rangle^{-1}
			\abs{(A_{pq}b_{q,\ell r})_{nm}}^2\notag\\
			&=\sum_{n>N}\sum_{m=-n}^{n}\langle n\rangle^{-1}
			\sum_{\ell=0}^{\infty}\sum_{r=-\ell}^{\ell}\langle\ell\rangle
			\abs{\int_{\Gamma_q}F_{p,nm}(\bm{y})b_{q,\ell r}(\bm{y})\dd  s_{\bm y}}^2\notag\\
			&=\sum_{n>N}\sum_{m=-n}^n
			\langle n\rangle^{-1}
			\norm{F_{p,nm}}_{H^{1/2}(\Gamma_q)}^2.
			\label{eq:target-weighted-HS}
		\end{align}
		Note that the integral in \eqref{eq:A-kernel} is bilinear rather than
		sesquilinear. For the complex basis in \eqref{eq:Y-definition}, the identity
		$\overline{Y_\ell^r}=(-1)^rY_\ell^{-r}$ merely permutes the orders
		in the Parseval sum, which justifies the last equality in \eqref{eq:target-weighted-HS}.
		
		For $\bm{y}\in\Gamma_q$,
		$R_p(\bm{y})\geq d_{pq}-a_q=R_{pq}^{\rm tar}$. Applying the second part of
		Lemma \ref{lem:differentiated-shell} to the kernel in \eqref{eq:A-kernel}, we obtain that, for every
		$a_p/R_{pq}^{\rm tar}<\eta<1$, there exist $C>0$ and $\mu>0$, such that
		\[
		\sum_{m=-n}^n\norm{F_{p,nm}}_{H^{1/2}(\Gamma_q)}^2
		\leq C\langle n\rangle^{\mu}\eta^{2n}.
		\]
		Insert this estimate into \eqref{eq:target-weighted-HS}. The additional factor
		$\langle n\rangle^{-1}$ provides only polynomial decay. By
		Lemma~\ref{lem:poly-tail}, for any $\eta<\theta<1$, it holds
		\[
		\norm{Q_{p,N}A_{pq}}_{\HS(-1/2\to-1/2)}
		\leq C\theta^N.
		\]
		Taking the $N$th root and \(N\to\infty\) gives
\[\limsup_{N\to\infty}\|Q_{p,N}A_{pq}\|_{\HS(-1/2\to-1/2)}^{1/N}\le\theta.\]
Thus letting $\theta\downarrow a_p/(d_{pq}-a_q)$ yields
		\eqref{eq:target-tail-root}.
	\end{proof}
	
	
	We next estimate the source-side high-degree tail. For a normalized source basis function, the single-layer field on $\Gamma_p$ is
	\begin{equation}\label{eq:source-mode}
		(V_{pq}b_{q,nm})(\bm{x})
		=\ii k a_qj_n(ka_q)h_n^{(1)}(kr_q(\bm{x}))
		Y_n^m(\widehat{\bm r}_q(\bm{x})),
		\qquad \bm{x}\in\Gamma_p,
	\end{equation}
	where $r_q(\bm{x})=\abs{\bm{x}-\bm{c}_q}\geq d_{pq}-a_p$.
	
	\begin{lemma}[Source-side high-degree interaction tail]\label{lem:source-tail}
		For $p\ne q$, it holds
		\begin{equation}\label{eq:source-tail-root}
			\limsup_{N\to\infty}
			\norm{A_{pq}Q_{q,N}}_{H^{-1/2}(\Gamma_q)\to H^{-1/2}(\Gamma_p)}^{1/N}
			\leq\frac{a_q}{d_{pq}-a_p}.
		\end{equation}
	\end{lemma}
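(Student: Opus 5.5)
We bound the operator norm by the weighted Hilbert--Schmidt norm, as in the proof of Lemma~\ref{lem:target-tail}, now organized by columns, i.e.\ by source modes. With the orthonormal basis $e_{q,nm}=\langle n\rangle^{1/2}b_{q,nm}$ of $H^{-1/2}(\Gamma_q)$, the operator $A_{pq}Q_{q,N}$ annihilates $e_{q,nm}$ for $n\le N$. Hence
\[
\norm{A_{pq}Q_{q,N}}_{\HS(-1/2\to-1/2)}^2
=\sum_{n>N}\sum_{m=-n}^n\langle n\rangle\,
\norm{V_{pp}^{-1}V_{pq}b_{q,nm}}_{H^{-1/2}(\Gamma_p)}^2 .
\]
Lemma~\ref{lem:self-inverse} gives $\norm{V_{pp}^{-1}f}_{H^{-1/2}(\Gamma_p)}\le C_p\norm{f}_{H^{1/2}(\Gamma_p)}$, with $C_p$ independent of $n$. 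This step removes the inverse self block at the cost of a fixed constant and loses no exponential factor. It leaves
\[
\norm{A_{pq}Q_{q,N}}_{\HS}^2\le C\sum_{n>N}\langle n\rangle\sum_{m=-n}^n\norm{V_{pq}b_{q,nm}}_{H^{1/2}(\Gamma_p)}^2 .
\]

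Next we insert the explicit translated spherical wave \eqref{eq:source-mode}. We have $V_{pq}b_{q,nm}=\ii k a_q\,u_{nm}$, where $u_{nm}=j_n(ka_q)h_n^{(1)}(kr_q)Y_n^m(\widehat{\bm r}_q)$ is exactly the family in \eqref{eq:differentiated-shell}. On $K=\Gamma_p$ we have $d_{pq}-a_p\le r_q(\bm x)\le d_{pq}+a_p$. We therefore take $R_-=R_{pq}^{\rm src}=d_{pq}-a_p>a_q$ by strict disjointness, and $R_+=d_{pq}+a_p$. The first part of Lemma~\ref{lem:differentiated-shell} then gives, for every $\eta\in(a_q/R_{pq}^{\rm src},1)$,
\[
\sum_{m}\norm{V_{pq}b_{q,nm}}_{H^{1/2}(\Gamma_p)}^2\le C\langle n\rangle^{\mu}\eta^{2n}.
\]
This bound already sums all $2n+1$ orders through the addition theorem, so no Gaunt coefficients appear.

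Substituting this bound, the HS norm squared is at most $C\sum_{n>N}\langle n\rangle^{\mu+1}\eta^{2n}$. Lemma~\ref{lem:poly-tail} then gives $\norm{A_{pq}Q_{q,N}}\le\norm{A_{pq}Q_{q,N}}_{\HS}\le C\theta^N$ for any $\theta\in(\eta,1)$. Taking $N$th roots gives $\limsup\le\theta$. Letting $\theta$ and then $\eta$ decrease to $a_q/(d_{pq}-a_p)$ yields \eqref{eq:source-tail-root}.

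The main point to verify carefully is the middle step. Replacing $V_{pp}^{-1}$ by the bound of Lemma~\ref{lem:self-inverse} must not depend on $n$, which holds because the constant is uniform over the whole space. It also requires the $H^{1/2}(\Gamma_p)$-norms of the translated waves to be controlled through their value and gradient energies, which is exactly what Lemma~\ref{lem:differentiated-shell} supplies via the $H^1\hookrightarrow H^{1/2}$ embedding. The extra weight $\langle n\rangle$ from the source-side $H^{-1/2}$ normalization contributes only a polynomial factor and therefore does not affect the $N$th-root rate.
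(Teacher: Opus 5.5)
Your proposal is correct and follows essentially the same argument as the paper. You use the weighted Hilbert--Schmidt norm organized by source columns, remove $V_{pp}^{-1}$ with the uniform bound of Lemma~\ref{lem:self-inverse}, control the $H^{1/2}(\Gamma_p)$ energy of the translated degree-$n$ family through value and gradient energies using $r_q\geq d_{pq}-a_p$, and finish with Lemma~\ref{lem:poly-tail}. The only difference is cosmetic: you cite the first part of Lemma~\ref{lem:differentiated-shell} directly, whereas the paper redoes that energy computation inline.
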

	
	\begin{proof}
		Again it suffices to estimate the weighted Hilbert--Schmidt norm. 
		Since the vectors
		$e_{q,nm}=\langle n\rangle^{1/2}b_{q,nm}$ are orthonormal in
		$H^{-1/2}(\Gamma_q)$, it holds
		\begin{align}
			\norm{A_{pq}Q_{q,N}}_{\HS(-1/2\to-1/2)}^2
			&=\sum_{n>N}\sum_{m=-n}^n
			\norm{A_{pq}e_{q,nm}}_{H^{-1/2}(\Gamma_p)}^2\notag\\
			&=\sum_{n>N}\sum_{m=-n}^n\langle n\rangle
			\norm{A_{pq}b_{q,nm}}_{H^{-1/2}(\Gamma_p)}^2.
			\label{eq:source-weighted-HS}
		\end{align}
		By Lemma \ref{lem:self-inverse}, we have
		\begin{equation}\label{eq:source-self-inverse}
			\norm{A_{pq}b_{q,nm}}_{H^{-1/2}(\Gamma_p)}
			=\norm{V_{pp}^{-1}V_{pq}b_{q,nm}}_{H^{-1/2}(\Gamma_p)}
			\leq C_p\norm{V_{pq}b_{q,nm}}_{H^{1/2}(\Gamma_p)}.
		\end{equation}
		This is the step at which the one-order shift caused by the inverse self block enters.
		
		Substitution of \eqref{eq:source-mode} followed by the degree-wise identities
		\eqref{eq:h-value-shell}--\eqref{eq:h-gradient-shell} gives
		\begin{align}
			& \sum_{m=-n}^n\norm{V_{pq}b_{q,nm}}_{H^{1/2}(\Gamma_p)}^2\notag\\
			&\quad\leq C\abs{j_n(ka_q)}^2
			\int_{\Gamma_p}\frac{2n+1}{4\pi}
			\left[
			\abs{h_n^{(1)}(kr_q)}^2
			+k^2\abs{h_n^{(1)\prime}(kr_q)}^2
			+\frac{n(n+1)}{r_q^2}\abs{h_n^{(1)}(kr_q)}^2
			\right]\dd s_{\bm x}.
			\label{eq:explicit-gradient-energy}
		\end{align}
		Since $r_q\geq R_{pq}^{\rm src}:=d_{pq}-a_p$, equations
		\eqref{eq:jh-root} and \eqref{eq:h-derivative-recurrence} show that, for every
		$a_q/R_{pq}^{\rm src}<\theta<1$, the right-hand side of
		\eqref{eq:explicit-gradient-energy} is at most
		$C\langle n\rangle^{\mu}\theta^{2n}$ for some $C>0$ and $\mu>0$.
		Consequently, equation \eqref{eq:source-self-inverse} and the preceding estimate give
		\[
		\sum_{m=-n}^n
		\norm{A_{pq}b_{q,nm}}_{H^{-1/2}(\Gamma_p)}^2
		\leq C_\theta\langle n\rangle^{\mu}\theta^{2n}.
		\]
		Insert this estimate  into equation
		\eqref{eq:source-weighted-HS}. Applying Lemma~\ref{lem:poly-tail}, taking
		the $N$th root, and then letting
		$\theta\downarrow a_q/(d_{pq}-a_p)$ yields
		\eqref{eq:source-tail-root}.
	\end{proof}
	
	We now combine the preceding estimates to bound the complete interaction tail.
	Define
	\begin{equation}\label{eq:rho-geo}
		\rho_{\rm geo}:=\max_{p\ne q}\frac{a_p}{d_{pq}-a_q}.
	\end{equation}
	Strict disjointness implies $\rho_{\rm geo}<1$.
	
	\begin{lemma}[Truncated interaction-operator tail]\label{lem:operator-tail}
		The following root bound holds:
		\begin{equation}\label{eq:operator-tail-root}
			\limsup_{N\to\infty}
			\norm{\bA-\PN\bA\PN}_{ \seqsp^{-1/2}\to \seqsp^{-1/2}}^{1/N}
			\leq\rho_{\rm geo}.
		\end{equation}
	\end{lemma}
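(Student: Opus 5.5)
The plan is to reduce the block operator tail to the directed block tails already controlled by Lemmas~\ref{lem:target-tail} and \ref{lem:source-tail}. Since $A_{pp}=0$ and $\PN$, $\QN$ are block diagonal, the $(p,q)$ block of $\bA-\PN\bA\PN$ is $A_{pq}-P_{p,N}A_{pq}P_{q,N}$ for $p\ne q$ and zero on the diagonal. Using $I=P+Q$ on each sphere, I would write
\[
A_{pq}-P_{p,N}A_{pq}P_{q,N}=Q_{p,N}A_{pq}+P_{p,N}A_{pq}Q_{q,N},
\]
so that the target-side tail appears in the first term and the source-side tail in the second. Because $P_{p,N}$ is an orthogonal projection in the diagonal norm on $H^{-1/2}(\Gamma_p)$, $\norm{P_{p,N}A_{pq}Q_{q,N}}\le\norm{A_{pq}Q_{q,N}}$.

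Next, the block operator norm on $\seqsp^{-1/2}$ is bounded by the Hilbert--Schmidt (or Frobenius-type) combination of block norms, in particular by $\sum_{p\ne q}\norm{(\bA-\PN\bA\PN)_{pq}}$, since $M$ is fixed. Hence
\[
\norm{\bA-\PN\bA\PN}\le\sum_{p\ne q}\Bigl(\norm{Q_{p,N}A_{pq}}+\norm{A_{pq}Q_{q,N}}\Bigr).
\]
For a finite sum of nonnegative sequences, $\limsup(\sum_i x_{i,N})^{1/N}\le\max_i\limsup x_{i,N}^{1/N}$, because $\sum_i x_{i,N}\le 2M^2\max_i x_{i,N}$ and $(2M^2)^{1/N}\to1$. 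Lemma~\ref{lem:target-tail} gives the rate $a_p/(d_{pq}-a_q)$ for the first family, and Lemma~\ref{lem:source-tail} gives $a_q/(d_{pq}-a_p)$ for the second.

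The remaining point is to identify the maximum with $\rho_{\rm geo}$. The source-side factor for the ordered pair $(p,q)$ is $a_q/(d_{qp}-a_p)$ since $d_{pq}=d_{qp}$. This is exactly the target-side factor for the ordered pair $(q,p)$, so it is included in the maximum over all ordered pairs $p\ne q$ in \eqref{eq:rho-geo}.

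No step is genuinely hard. The only care needed is the symmetric relabelling and the fact that the projections do not increase norms, since the norm on $\seqsp^{-1/2}$ is diagonal in the spherical-harmonic basis.
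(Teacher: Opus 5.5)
Your proposal is correct and follows essentially the same route as the paper: the exact splitting $\bA-\PN\bA\PN=\QN\bA+\PN\bA\QN$, a blockwise application of Lemmas~\ref{lem:target-tail} and~\ref{lem:source-tail}, and absorption of the finite block-count constant under the $N$th root. Your explicit observations that $\norm{P_{p,N}}\le 1$ in the diagonal $H^{-1/2}$ norm, and that the source-side factor for $(p,q)$ is the target-side factor for $(q,p)$, spell out details the paper leaves implicit.
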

	
	\begin{proof}
		Use the exact decomposition
		\begin{equation}\label{eq:tail-decomposition}
			\bA-\PN\bA\PN=\QN\bA+\PN\bA\QN.
		\end{equation}
		There are only finitely many off-diagonal blocks. Apply
		Lemma \ref{lem:target-tail} to every block of $\QN\bA$ and
		Lemma \ref{lem:source-tail} to every block of $\bA\QN$. The norm of a finite block matrix is bounded by a fixed finite multiple of the largest block norm, and such fixed constants disappear after taking the $N$th root. The maximum of the two directed root factors is $\rho_{\rm geo}$, which proves \eqref{eq:operator-tail-root}.
	\end{proof}
	
	Now let us study the error introduced by the truncation of incident data. 
	Let $f_p=u^{\inc}|_{\Gamma_p}$ and expand
	$f_p=\sum f_{p,nm}b_{p,nm}$. By equations \eqref{eq:G-definition} and
	\eqref{eq:self-eigenvalue}, we have
	\begin{equation}\label{eq:G-coeff-general}
		G_{p,nm}=-\frac{f_{p,nm}}{\lambda_n^{(p)}}.
	\end{equation}
	
	\begin{lemma}\label{lem:data-tail}
		The incident data tail satisfies the following estimates:
		\begin{enumerate}
			\item For plane-wave incidence, it holds
			\begin{equation}\label{eq:plane-data-tail}
				\limsup_{N\to\infty}\norm{\QN G}_{ \seqsp^{-1/2}}^{1/N}=0.
			\end{equation}
			\item For incidence generated by a point source $\bm{x}_0$ satisfying
			$d_{p0}:=\abs{\bm{x}_0-\bm{c}_p}>a_p$ for every $p$, it holds 
			\begin{equation}\label{eq:point-data-tail}
				\limsup_{N\to\infty}\norm{\QN G}_{ \seqsp^{-1/2}}^{1/N}
				\leq\rho_{\rm src}:=\max_p\frac{a_p}{d_{p0}}.
			\end{equation}
		\end{enumerate}
	\end{lemma}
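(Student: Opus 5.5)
The plan is to compute the coefficients $f_{p,nm}$ explicitly by expanding $u^{\inc}$ about $\bm{c}_p$, and then to insert them into \eqref{eq:G-coeff-general}. From there the estimate reduces to a single degree-wise bound of the form $\sum_m|G_{p,nm}|^2\le C\langle n\rangle^{\mu}\theta^{2n}$. After that, Lemma~\ref{lem:poly-tail} finishes the argument, since
\[
\norm{\QN G}_{\seqsp^{-1/2}}^2=\sum_p\sum_{n>N}\langle n\rangle^{-1}\sum_m|G_{p,nm}|^2 .
\]
As in Lemmas~\ref{lem:target-tail}--\ref{lem:source-tail}, the sum over orders $m$ is carried out exactly by the addition theorem. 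This way no individual coefficient has to be estimated.

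\emph{Plane wave.} The Jacobi--Anger expansion about $\bm{c}_p$ gives
\[
u^{\inc}(\bm{c}_p+a_p\bm\omega)=\e^{\ii k\widehat{\bm d}\cdot\bm{c}_p}\,4\pi\sum_{n,m}\ii^n j_n(ka_p)\overline{Y_n^m(\widehat{\bm d})}Y_n^m(\bm\omega).
\]
Hence $f_{p,nm}=4\pi a_p\ii^n\e^{\ii k\widehat{\bm d}\cdot\bm{c}_p}j_n(ka_p)\overline{Y_n^m(\widehat{\bm d})}$. Dividing by $\lambda_n^{(p)}=\ii ka_p^2j_n(ka_p)h_n^{(1)}(ka_p)$ cancels the factor $j_n(ka_p)$, and this cancellation is legitimate by Assumption~\ref{ass:nonresonance}. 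The addition theorem $\sum_m|Y_n^m(\widehat{\bm d})|^2=(2n+1)/(4\pi)$ then yields
\[
\sum_m|G_{p,nm}|^2=\frac{4\pi(2n+1)}{k^2a_p^2\,|h_n^{(1)}(ka_p)|^2}.
\]
By \eqref{eq:h-asymptotic}, $|h_n^{(1)}(ka_p)|^{-1}\sim (ka_p)^{n+1}/(2n-1)!!$, which decays superexponentially. Therefore, for every $\theta>0$, the degree-$n$ sum is at most $C_\theta\langle n\rangle\theta^{2n}$. Lemma~\ref{lem:poly-tail} applied with arbitrarily small $\theta$ gives $\limsup\norm{\QN G}^{1/N}\le\theta$ for all $\theta>0$, which proves \eqref{eq:plane-data-tail}.

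\emph{Point source.} Since $d_{p0}>a_p$, the Green addition formula (the one used in Proposition~\ref{prop:interaction-kernel}) gives
\[
\Phi_k(\bm{c}_p+a_p\bm\omega,\bm{x}_0)=\ii k\sum_{n,m}j_n(ka_p)h_n^{(1)}(kd_{p0})\overline{Y_n^m(\widehat{\bm x_0-\bm c_p})}Y_n^m(\bm\omega).
\]
Thus $f_{p,nm}=\ii ka_pj_n(ka_p)h_n^{(1)}(kd_{p0})\overline{Y_n^m(\cdot)}$. The same cancellation with $\lambda_n^{(p)}$ produces
\[
G_{p,nm}=-\frac{h_n^{(1)}(kd_{p0})}{a_p\,h_n^{(1)}(ka_p)}\overline{Y_n^m(\cdot)},
\]
and hence
\[
\sum_m|G_{p,nm}|^2=\frac{2n+1}{4\pi a_p^2}\left|\frac{h_n^{(1)}(kd_{p0})}{h_n^{(1)}(ka_p)}\right|^2 .
\]
By \eqref{eq:h-ratio-root}, for any $\theta>a_p/d_{p0}$ this is at most $C\langle n\rangle\theta^{2n}$. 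Summing over the finitely many $p$ and applying Lemma~\ref{lem:poly-tail}, then letting $\theta\downarrow\max_pa_p/d_{p0}$, gives \eqref{eq:point-data-tail}.

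The one step needing care is the justification of the expansions as identities in $H^{1/2}(\Gamma_p)$, i.e.\ that coefficient identification is valid. This holds because both series converge uniformly on $\Gamma_p$: for the point source this is precisely the condition $d_{p0}>a_p$. Everything else is routine bookkeeping of polynomial factors. I do not expect a genuine obstacle; the key mechanism is again the exact cancellation of $j_n(ka_p)$ against the self eigenvalue.
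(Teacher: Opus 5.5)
Your proposal is correct and follows the paper's proof essentially step by step: you expand $u^{\inc}$ about $\bm{c}_p$ (Jacobi--Anger, respectively the Green-function addition formula), cancel $j_n(ka_p)$ against $\lambda_n^{(p)}$, sum over the orders via \eqref{eq:value-shell}, and conclude with \eqref{eq:h-asymptotic}, respectively \eqref{eq:h-ratio-root}, together with Lemma~\ref{lem:poly-tail}. Your version just makes the degree-wise constants explicit; the only small point is that in the point-source case $\theta$ should be taken in $(\max_p a_p/d_{p0},1)$ so that Lemma~\ref{lem:poly-tail} applies.
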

	
	\begin{proof}
		For a plane wave, the standard regular expansion about $\bm{c}_p$ is
		\[
		\e^{\ii k\widehat{\bm d}\cdot(\bm{c}_p+a_p\bm{\omega})}
		=4\pi\e^{\ii k\widehat{\bm d}\cdot \bm{c}_p}
		\sum_{n,m}\ii^nj_n(ka_p)Y_n^m(\bm{\omega})
		\overline{Y_n^m(\widehat{\bm d})}.
		\]
		Because $b_{p,nm}=a_p^{-1}Y_n^m$, its surface coefficient is
		\[
		f_{p,nm}=4\pi a_p\e^{\ii k\widehat{\bm d}\cdot \bm{c}_p}
		\ii^nj_n(ka_p)\overline{Y_n^m(\widehat{\bm d})}.
		\]
		Substituting this expression into equation \eqref{eq:G-coeff-general} cancels
		$j_n(ka_p)$ and gives
		\begin{equation}\label{eq:G-plane-coeff}
			G_{p,nm}
			=-\frac{4\pi\ii^{n-1}\e^{\ii k\widehat{\bm d}\cdot \bm{c}_p}}
			{ka_ph_n^{(1)}(ka_p)}
			\overline{Y_n^m(\widehat{\bm d})}.
		\end{equation}
		The reciprocal of $h_n^{(1)}(ka_p)$ decays factorially.  The conclusion  \eqref{eq:plane-data-tail} follows by summing
		$\abs{Y_n^m(\widehat{\bm d})}^2$ over $m$ by \eqref{eq:value-shell}, and multiplying by the weight $\langle n\rangle^{-1}$.
		
		For a point source, let
		$\widehat{\bm d}_{p0}:=(\bm{x}_0-\bm{c}_p)/d_{p0}$. Equation \eqref{eq:green-expansion} in the Appendix gives
		\[
		f_{p,nm}=\ii k a_pj_n(ka_p)h_n^{(1)}(kd_{p0})
		\overline{Y_n^m(\widehat{\bm d}_{p0})},
		\]
		which implies
		\begin{equation}\label{eq:G-point-coeff}
			G_{p,nm}
			=-\frac{h_n^{(1)}(kd_{p0})}
			{a_ph_n^{(1)}(ka_p)}
			\overline{Y_n^m(\widehat{\bm d}_{p0})}.
		\end{equation}
		After the summation over the order m, the degree-$n$ contribution to
		$\norm{G_p}_{H^{-1/2}(\Gamma_p)}^2$ is a polynomial factor times
		$\abs{h_n^{(1)}(kd_{p0})/h_n^{(1)}(ka_p)}^2$.
		Equation \eqref{eq:h-ratio-root} and Lemma \ref{lem:poly-tail} give the root factor $a_p/d_{p0}$. Taking the maximum over the finite set of spheres yields \eqref{eq:point-data-tail}.
	\end{proof}
	
	
	The interaction-tail estimate is now available, so the truncated systems can
	be shown to inherit the invertibility of the infinite system.
	
	\begin{lemma}\label{lem:stability}
		Under Assumption \ref{ass:nonresonance}, there exists $N_0$ such that
		$\bW_N$ is invertible on $ \seqsp^{-1/2}$ for all $N\geq N_0$, and
		\begin{equation}\label{eq:uniform-stability}
			\sup_{N\geq N_0}\norm{\bW_N^{-1}}_{ \seqsp^{-1/2}\to \seqsp^{-1/2}}
			\leq2\norm{\bW^{-1}}_{ \seqsp^{-1/2}\to \seqsp^{-1/2}}.
		\end{equation}
		Consequently, the truncated MEM system \eqref{eq:finite-system} is uniquely solvable for every $N\geq N_0$.
	\end{lemma}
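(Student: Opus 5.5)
The approach is a standard Neumann-series perturbation argument around the invertible operator $\bW=\Id+\bA$ of Proposition~\ref{prop:W-invertible}, with the perturbation controlled by Lemma~\ref{lem:operator-tail}. By definition \eqref{eq:WN},
\[
\bW_N=\Id+\PN\bA\PN=\bW-(\bA-\PN\bA\PN),
\]
so I will write
\[
\bW_N=\bW\bigl(\Id-\bW^{-1}E_N\bigr),
\qquad
E_N:=\bA-\PN\bA\PN .
\]
The whole task is then to make $\norm{\bW^{-1}E_N}$ small uniformly for large $N$.

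First I will extract a concrete decay bound from the root estimate \eqref{eq:operator-tail-root}. Fix $\eta$ with $\rho_{\rm geo}<\eta<1$, which is possible since $\rho_{\rm geo}<1$ by strict disjointness. The limsup statement then yields $N_1$ such that
\[
\norm{E_N}_{\seqsp^{-1/2}\to\seqsp^{-1/2}}\le \eta^N
\qquad\text{for all } N\ge N_1 ,
\]
so in particular $\norm{E_N}\to0$. Next I choose $N_0\ge N_1$ so large that
\[
\norm{\bW^{-1}}\,\eta^{N_0}\le \tfrac12 ,
\]
using that $\norm{\bW^{-1}}$ is finite by Proposition~\ref{prop:W-invertible}. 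For every $N\ge N_0$ this gives $\norm{\bW^{-1}E_N}\le\frac12$. The Neumann series therefore shows that $\Id-\bW^{-1}E_N$ is invertible on $\seqsp^{-1/2}$ with inverse of norm at most $2$. Hence
\[
\bW_N^{-1}=(\Id-\bW^{-1}E_N)^{-1}\bW^{-1}
\]
exists, and $\norm{\bW_N^{-1}}\le 2\norm{\bW^{-1}}$, which is exactly \eqref{eq:uniform-stability}.

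Finally, for the unique solvability of \eqref{eq:finite-system}, I will use that $\bW_N$ commutes with $\PN$ and acts as the identity on $\operatorname{Ran}\QN$. Indeed, $\QN\bW_N=\QN=\bW_N\QN$, so $\bW_N$ leaves both $\operatorname{Ran}\PN$ and $\operatorname{Ran}\QN$ invariant. Consequently the restriction of $\bW_N$ to the finite-dimensional space $\operatorname{Ran}\PN$ equals $\PN(\Id+\bA)\PN$ there and is injective, hence invertible. Equivalently, $\bW_N\Phi_N=\PN G$ forces $\QN\Phi_N=\QN\PN G=0$, matching the equivalence noted after \eqref{eq:WN}. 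This gives existence and uniqueness of $\Phi_N$.

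I do not expect a real obstacle: the substantive work is already contained in Lemma~\ref{lem:operator-tail}. The only point needing care is converting the limsup root bound into an actual norm bound valid for all $N\ge N_0$, and that is done by the choice of $\eta$ above.
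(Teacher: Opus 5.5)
Your proposal is correct and follows the paper's route: factor $\bW_N=\bW\bigl(\Id-\bW^{-1}(\bA-\PN\bA\PN)\bigr)$, use Lemma~\ref{lem:operator-tail} to make the perturbation have norm at most $\tfrac12$ for $N\geq N_0$, and apply the Neumann series to get the factor $2$. The paper states the remaining steps briefly, and you spell them out: you convert the $\limsup$ root bound into $\norm{\bA-\PN\bA\PN}\le\eta^N$, and you show that $\bW_N$ commutes with $\PN$, so that its restriction to $\operatorname{Ran}\PN$ is the truncated matrix.
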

	
	\begin{proof}
		By equations \eqref{eq:WN} and \eqref{eq:infinite-system}, it holds
		\[
		\bW_N-\bW=\PN\bA\PN-\bA.
		\]
		Lemma~\ref{lem:operator-tail} implies \(\|\bW_N-\bW\|_{ \seqsp^{-1/2}\to \seqsp^{-1/2}}\to0\). Choose some sufficiently large $N_0$ such that for all $N\geq N_0$
		\[
		\norm{\bW^{-1}(\bW_N-\bW)}_{ \seqsp^{-1/2}\to \seqsp^{-1/2}}\leq\frac12,
		\]
		and factor
		\[
		\bW_N=\bW\left[\Id+\bW^{-1}(\bW_N-\bW)\right].
		\]
		The bracket is invertible by its Neumann series and its inverse has norm at most $2$, which proves \eqref{eq:uniform-stability}. Since $\bW_N$ is  identity on $\QN (\seqsp^{-1/2})$, its invertibility on the full space is equivalent to the invertibility of the truncated matrix on $\PN (\seqsp^{-1/2})$.
	\end{proof}
	
	Define
	\begin{equation}\label{eq:rho-data}
		\rho_{\rm data}:=
		\begin{cases}
			0,&\text{for plane-wave incidence},\\[1mm]
			\displaystyle\max_p\frac{a_p}{d_{p0}},&\text{for point-source incidence}.
		\end{cases}
	\end{equation}
	We now establish the full convergence result of the truncated MEM system.

	\begin{theorem}\label{thm:main}
		 Under Assumption \ref{ass:nonresonance}, let
		$\Phi$ and $\Phi_N$ be the solutions of equations \eqref{eq:infinite-system} and 
		\eqref{eq:finite-system}, respectively. Then $\Phi_N$ exists uniquely for all sufficiently large $N$ and
		\begin{equation}\label{eq:main-root}
			\limsup_{N\to\infty}
			\norm{\Phi-\Phi_N}_{ \seqsp^{-1/2}}^{1/N}
			\leq\max\{\rho_{\rm data},\rho_{\rm geo}\}.
		\end{equation}
		Equivalently, for every $r$ satisfying
		$\max\{\rho_{\rm data},\rho_{\rm geo}\}<r<1$, there is a constant
		$C>0$, independent of $N$, such that
		\begin{equation}\label{eq:main-geometric}
			\norm{\Phi-\Phi_N}_{ \seqsp^{-1/2}}\leq Cr^N
		\end{equation}
		for all sufficiently large $N$.
	\end{theorem}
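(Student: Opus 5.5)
The proof assembles the ingredients already established; no new special-function estimates are needed.

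\emph{Existence.} Lemma~\ref{lem:stability} supplies $N_0$ such that for $N\geq N_0$ the operator $\bW_N$ is invertible on $\seqsp^{-1/2}$. Since $\bW_N$ acts as the identity on $\QN(\seqsp^{-1/2})$, the truncated system \eqref{eq:finite-system} then has a unique solution $\Phi_N$. The same lemma gives the uniform bound $\norm{\bW_N^{-1}}\leq 2\norm{\bW^{-1}}$, and $\bW^{-1}$ is bounded by Proposition~\ref{prop:W-invertible}.

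\emph{Error bound.} Proposition~\ref{prop:error-identity} applies for these $N$. Put $\rho:=\max\{\rho_{\rm data},\rho_{\rm geo}\}$. For $N\ge N_0$,
\[
\norm{\Phi-\Phi_N}_{\seqsp^{-1/2}}\leq 2\norm{\bW^{-1}}\Bigl(\norm{\QN G}_{\seqsp^{-1/2}}+\norm{\bA-\PN\bA\PN}\,\norm{\Phi}_{\seqsp^{-1/2}}\Bigr).
\]
Here $\norm{\Phi}_{\seqsp^{-1/2}}<\infty$ is a fixed constant. This requires $G\in\seqsp^{-1/2}$, which follows from Lemma~\ref{lem:self-inverse} because $u^{\inc}|_\Gamma\in\bigoplus_pH^{1/2}(\Gamma_p)$; for a point source this uses $\bm x_0\in\Omega^+$.

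\emph{Root asymptotics.} Lemma~\ref{lem:data-tail} gives $\limsup\norm{\QN G}^{1/N}\le\rho_{\rm data}$. The point-source hypothesis $d_{p0}>a_p$ holds since $\bm x_0\notin\overline{B_p}$. Lemma~\ref{lem:operator-tail} gives $\limsup\norm{\bA-\PN\bA\PN}^{1/N}\le\rho_{\rm geo}$. We then use the elementary fact
\[
\limsup(x_N+y_N)^{1/N}\le\max\{\limsup x_N^{1/N},\,\limsup y_N^{1/N}\}\qquad(x_N,y_N\ge0),
\]
which follows from $x_N+y_N\le 2\max\{x_N,y_N\}$. Fixed multiplicative constants tend to $1$ after taking $N$th roots, so this yields \eqref{eq:main-root}.

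\emph{Equivalent geometric form.} Given $\rho<r<1$, fix $r'$ with $\rho<r'<r$. By \eqref{eq:main-root} and the definition of $\limsup$, $\norm{\Phi-\Phi_N}\le r'^N\le r^N$ for all large $N$, which gives \eqref{eq:main-geometric} with $C=1$. Conversely, \eqref{eq:main-geometric} for every such $r$ implies \eqref{eq:main-root} after taking $N$th roots and letting $r\downarrow\rho$.

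\emph{Main obstacle.} The only delicate point is bookkeeping rather than analysis: the error identity has to be used with a stability constant that is uniform in $N$, which is exactly what Lemma~\ref{lem:stability} provides. The substantive work lies in Lemmas~\ref{lem:operator-tail} and~\ref{lem:data-tail}, which are taken as given.
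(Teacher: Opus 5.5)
Your proposal is correct and follows the paper's proof: Lemma~\ref{lem:stability} gives unique solvability and the uniform inverse bound, Proposition~\ref{prop:error-identity} gives the error bound, and Lemmas~\ref{lem:data-tail} and~\ref{lem:operator-tail} give the two root rates, which the elementary $\limsup$ inequality combines. Your extra checks ($G\in\seqsp^{-1/2}$, $d_{p0}>a_p$ from $\bm{x}_0\in\Omega^+$, and the equivalence of \eqref{eq:main-root} with \eqref{eq:main-geometric}) spell out steps the paper leaves implicit.
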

	
	\begin{proof}
		Existence, uniqueness, and the uniform inverse bound follow from
		Lemma \ref{lem:stability}. Based on equation \eqref{eq:abstract-error-bound}, the first term on its right has root rate at most $\rho_{\rm data}$ by
		Lemma \ref{lem:data-tail}. The operator factor in the second term has root rate at most $\rho_{\rm geo}$ by Lemma \ref{lem:operator-tail}. The exact solution $\Phi$ and the stability constant are independent of $N$, which proves the estimate \eqref{eq:main-root}, as well as the estimate \eqref{eq:main-geometric}.
	\end{proof}
	
	\begin{remark}[Near- and far-field consequences]\label{rem:field-convergence}
		Let $U\Subset\Omega^+$ be a bounded smooth domain staying a positive distance from all spheres.
		The single-layer field map from $\seqsp^{-1/2}$ to $H^t(U)$ is bounded for
		every fixed integer $t\geq0$, and the far-field map is bounded into
		$L^2(\Sph)$. Hence the near-field and far-field errors inherit the same
		root bound $\max\{\rho_{\rm data},\rho_{\rm geo}\}$ as
		\eqref{eq:main-root}. 
	\end{remark}
	
	\section{First-transfer theory}\label{sec:first}
	
	The full theorem treats all orders of multiple reflection through an
	operator norm tail. Additional analyticity of the isolated sphere response
	permits a sharper rate for the explicitly identified first-transfer part of
	the error. We derive that refinement first and then compare our proof
	 with the existing two-dimensional theory \cite{Fitzpatrick2021}.
	
	
	Write
	\begin{equation}\label{eq:Phi-split}
		\Phi=G+\Phi^{\diff},
		\qquad
		\Phi^{\diff}:=\Phi-G.
	\end{equation}
	Substituting \eqref{eq:Phi-split} into equation \eqref{eq:error-identity} gives
	\begin{equation}\label{eq:three-part-error}
		\Phi-\Phi_N
		=\bW_N^{-1}
		\left[
		\underbrace{\QN G}_{\text{incident data tail}}
		-\underbrace{(\bA-\PN\bA\PN)G}_{\text{first-transfer}}
		-\underbrace{(\bA-\PN\bA\PN)\Phi^{\diff}}_{\text{higher transfers}}
		\right].
	\end{equation}
	For all sufficiently large $N$, define
	\begin{align}
		\mathcal E_N^{(1)}
		&:=\norm{\bW_N^{-1}}_{\seqsp^{-1/2}\to\seqsp^{-1/2}}
		\left(
		\norm{\QN G}_{ \seqsp^{-1/2}}
		+\norm{(\bA-\PN\bA\PN)G}_{ \seqsp^{-1/2}}
		\right),\label{eq:E1}\\
		\mathcal E_N^{(\diff)}
		&:=\norm{\bW_N^{-1}}_{\seqsp^{-1/2}\to\seqsp^{-1/2}}
		\norm{(\bA-\PN\bA\PN)\Phi^{\diff}}_{ \seqsp^{-1/2}}.
		\label{eq:Ediff}
	\end{align}
	Then
	\begin{equation}\label{eq:error-E-split}
		\norm{\Phi-\Phi_N}_{ \seqsp^{-1/2}}
		\leq\mathcal E_N^{(1)}+\mathcal E_N^{(\diff)}.
	\end{equation}
	The next result provides a more refined analysis of $\mathcal E_N^{(1)}$ by
	estimating the tail of an analytic local field.
	
	
	\begin{lemma}\label{lem:analytic-local}
		Under Assumption \ref{ass:nonresonance}, let $v$ solve the Helmholtz equation in the ball $B(\bm{c}_p,R)$, where $R>a_p$, and define
		\begin{equation}\label{eq:gv}
			g_v:=V_{pp}^{-1}(v|_{\Gamma_p}).
		\end{equation}
		Then
		\begin{equation}\label{eq:analytic-local-root}
			\limsup_{N\to\infty}\norm{Q_{p,N}g_v}_{H^{-1/2}(\Gamma_p)}^{1/N}
			\leq\frac{a_p}{R}.
		\end{equation}
	\end{lemma}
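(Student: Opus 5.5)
Since $v$ solves the Helmholtz equation in $B(\bm{c}_p,R)$, we expand it in regular spherical waves about $\bm{c}_p$:
\[
v(\bm{c}_p+\rho\bm{\omega})=\sum_{n=0}^\infty\sum_{m=-n}^n \alpha_{nm}\,j_n(k\rho)Y_n^m(\bm{\omega}),\qquad 0\le\rho<R .
\]
This expansion converges in $L^2$ on every sphere of radius $\rho<R$. Fix any $R'$ with $a_p<R'<R$. Orthonormality of $Y_n^m$ on the sphere of radius $R'$ gives Parseval's identity
\[
\sum_{n,m}\abs{\alpha_{nm}}^2\abs{j_n(kR')}^2=(R')^{-2}\norm{v}_{L^2(\partial B(\bm{c}_p,R'))}^2=:C_{R'}<\infty.
\]
To make this identity legitimate at every degree, we choose $R'$ so that $j_n(kR')\neq0$ for all $n$. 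Each $j_n$ has only countably many positive zeros, so the admissible radii are all but countably many and such $R'$ exist. We only need nonvanishing for large $n$ anyway, where \eqref{eq:j-asymptotic} already guarantees it.

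The coefficients of $v|_{\Gamma_p}$ in the basis $b_{p,nm}=a_p^{-1}Y_n^m$ are $f_{nm}=a_p\alpha_{nm}j_n(ka_p)$. By \eqref{eq:self-eigenvalue} we then have
\[
(g_v)_{nm}=\frac{f_{nm}}{\lambda_n^{(p)}}=\frac{\alpha_{nm}}{\ii k a_p h_n^{(1)}(ka_p)}.
\]
Assumption~\ref{ass:nonresonance} is what makes $V_{pp}^{-1}$ well defined here; the factor $j_n(ka_p)$ cancels exactly, just as in the plane-wave case of Lemma~\ref{lem:data-tail}. Writing $\abs{\alpha_{nm}}=\abs{\alpha_{nm}j_n(kR')}/\abs{j_n(kR')}$, we obtain for $n$ large
\[
\norm{Q_{p,N}g_v}_{H^{-1/2}(\Gamma_p)}^2\le \sum_{n>N}\langle n\rangle^{-1}\frac{1}{k^2a_p^2\abs{j_n(kR')h_n^{(1)}(ka_p)}^2}\sum_{m}\abs{\alpha_{nm}j_n(kR')}^2 .
\]
The inner sums over $m$ are bounded by $C_{R'}$ uniformly in $n$.

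The asymptotics \eqref{eq:j-asymptotic}--\eqref{eq:h-asymptotic} give
\[
\abs{j_n(kR')h_n^{(1)}(ka_p)}^{-1}=(2n+1)\,(a_p/R')^{n}\bigl(1+\cO(n^{-1})\bigr)\Big/ (kR'/(ka_p))^{0}\cdot ka_p ,
\]
which is of the form $\mathrm{poly}(n)\,(a_p/R')^n$; this is the analogue of \eqref{eq:jh-root} with the roles of $a$ and $R$ reversed. Hence the tail is bounded by $C\sum_{n>N}\langle n\rangle^{\alpha}(a_p/R')^{2n}$. Lemma~\ref{lem:poly-tail} then yields $\limsup_N\norm{Q_{p,N}g_v}^{1/N}\le a_p/R'$. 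Finally, we let $R'\uparrow R$ through admissible radii to obtain \eqref{eq:analytic-local-root}.

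The main obstacle is the first step: justifying the regular expansion and the Parseval bound on an interior sphere of radius $R'<R$ with only the hypothesis that $v$ is a Helmholtz solution in the open ball. The subtle point is dividing by $j_n(kR')$, which is why $R'$ must avoid the countably many zeros. Since $v$ is real-analytic inside the ball, the expansion holds uniformly on compact subsets, and the rest is the same bookkeeping as in Lemma~\ref{lem:data-tail}.
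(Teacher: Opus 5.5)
Your proposal is correct and follows essentially the same route as the paper: the regular expansion about $\bm{c}_p$, Parseval on an interior sphere of radius $R'\in(a_p,R)$, cancellation of $j_n(ka_p)$ against $\lambda_n^{(p)}$, division by $j_n(kR')$, the root asymptotics $\abs{j_n(kR')h_n^{(1)}(ka_p)}^{-1/n}\to a_p/R'$, and finally $R'\uparrow R$. The differences are cosmetic. The paper restricts to large $n$, where $j_n(kR')\neq0$ automatically, instead of avoiding the countably many exceptional radii. It also pulls out $\eta^{2N}$ against the full Parseval sum, whereas you bound each order-sum by $C_{R'}$ and invoke Lemma~\ref{lem:poly-tail}. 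Your displayed asymptotic formula is oddly written (the factor $(kR'/(ka_p))^0$ is just $1$), but it gives the correct value $(2n+1)\,ka_p\,(a_p/R')^n$.
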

	
	\begin{proof}
		Fix $R'$ with $a_p<R'<R$. Regular separation of variables gives
		\begin{equation}\label{eq:v-regular}
			v(\bm{c}_p+r\bm{\omega})=\sum_{n,m}\alpha_{nm}j_n(kr)Y_n^m(\bm{\omega}),
			\qquad 0\leq r\leq R'.
		\end{equation}
		The trace at $r=R'$ belongs to $L^2(\Sph)$, so Parseval identity gives
		\begin{equation}\label{eq:alpha-R}
			\sum_{n,m}\abs{\alpha_{nm}j_n(kR')}^2<\infty.
		\end{equation}
		For a fixed $kR'>0$, the large-order asymptotic formula for $j_n(kR')$ shows that it is
		nonzero for all sufficiently large $n$. For those degrees, the coefficient
		of $v$ on $\Gamma_p$ in the normalized basis $b_{p,nm}$ is
		$a_p\alpha_{nm}j_n(ka_p)$, and division by $\lambda_n^{(p)}$ gives
		\begin{equation}\label{eq:gv-coeff}
			(g_v)_{nm}
			=\frac{\alpha_{nm}}{\ii k a_ph_n^{(1)}(ka_p)}
			=\frac{\alpha_{nm}j_n(kR')}{\ii k a_pj_n(kR')h_n^{(1)}(ka_p)}.
		\end{equation}
		By the asymptotics \eqref{eq:j-asymptotic} and \eqref{eq:h-asymptotic}, it holds
		\begin{equation}\label{eq:inverse-jh-root}
			\lim_{n\to\infty}
			\abs{\frac{1}{j_n(kR')h_n^{(1)}(ka_p)}}^{1/n}
			=\frac{a_p}{R'}.
		\end{equation}

		For any $\eta$ with $a_p/R'<\eta<1$, when $N$ is sufficiently large, equations \eqref{eq:alpha-R} and
		\eqref{eq:inverse-jh-root} therefore yield
		\begin{align*}
			\norm{Q_{p,N}g_v}_{H^{-1/2}(\Gamma_p)}^2
			&\leq C_\eta\sum_{n>N}\sum_{m=-n}^n
			\langle n\rangle^{-1}\eta^{2n}
			\abs{\alpha_{nm}j_n(kR')}^2\\
			&\leq C_\eta\eta^{2N}
			\sum_{n,m}\abs{\alpha_{nm}j_n(kR')}^2.
		\end{align*}
		Taking square roots and then $N$th roots, followed by the limits
		$\eta\downarrow a_p/R'$ and $R'\uparrow R$, yields
		\eqref{eq:analytic-local-root}.
	\end{proof}
	
	
	Let
	\begin{equation}\label{eq:isolated-field}
		v_q^{(0)}:=\mathcal S_qG_q,
	\end{equation}
	where \(\mathcal S_q\) denotes the single-layer potential supported on \(\Gamma_q\). Because $V_{qq}G_q=-u^{\inc}|_{\Gamma_q}$, this is exactly the field scattered by sphere $q$ when all other spheres are absent. Moreover, we have
	\begin{equation}\label{eq:AG-isolated}
		A_{pq}G_q=V_{pp}^{-1}(v_q^{(0)}|_{\Gamma_p}).
	\end{equation}
	\begin{remark}
	    By use of equation \eqref{eq:green-expansion}, the exterior single-layer expansion can be obtained, that's letting 
    $\phi_q=\sum_{n,m}\Phi_{q,nm}b_{q,nm},$ then for $|x-c_q|>a_q$,
    \begin{equation}\label{eq:density-field}
 (\mathcal S_q\phi_q)(x)
 =\sum_{n,m}\ii k a_qj_n(ka_q)\Phi_{q,nm}
 h_n^{(1)}(kr_q)Y_n^m(\widehat r_q).
\end{equation}
	\end{remark}
	\begin{lemma}\label{lem:effective-radius}
		When $p\ne q$, it holds:
		\begin{enumerate}[label=(\roman*),leftmargin=2.2em]
			\item For plane wave incidence, the exterior field $v_q^{(0)}$ admits an analytic continuation in
			$\R^3\setminus\{\bm{c}_q\}$. Consequently it is analytic in every ball
			$B(\bm{c}_p,R)$ with $R<d_{pq}$.
			\item For a point source, put
			\begin{equation}\label{eq:rq-star}
				r_q^*:=\frac{a_q^2}{d_{q0}}<a_q.
			\end{equation}
			Then the exterior field $v_q^{(0)}$ admits an analytic continuation in $\{\bm{x}\in\R^3|\abs{\bm{x}-\bm{c}_q}>r_q^*\}$ and therefore in every ball
			$B(\bm{c}_p,R)$ with $R<d_{pq}-r_q^*$.
		\end{enumerate}
	\end{lemma}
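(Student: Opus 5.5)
The plan is to write $v_q^{(0)}$ explicitly via the exterior expansion \eqref{eq:density-field} with $\phi_q=G_q$, and to read off its domain of convergence from the decay of the coefficients.

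Combining \eqref{eq:density-field} with \eqref{eq:G-plane-coeff} (respectively \eqref{eq:G-point-coeff}) gives, for $|\bm x-\bm c_q|>a_q$,
\[
v_q^{(0)}(\bm x)=\sum_{n,m}\beta_{nm}\,h_n^{(1)}(kr_q)Y_n^m(\widehat{\bm r}_q).
\]
The coefficients are
\[
\beta_{nm}=-4\pi \ii^{n}\e^{\ii k\widehat{\bm d}\cdot\bm c_q}\,\frac{j_n(ka_q)}{h_n^{(1)}(ka_q)}\,\overline{Y_n^m(\widehat{\bm d})}
\]
for plane-wave incidence, and
\[
\beta_{nm}=-\ii k\,\frac{j_n(ka_q)\,h_n^{(1)}(kd_{q0})}{h_n^{(1)}(ka_q)}\,\overline{Y_n^m(\widehat{\bm d}_{q0})}
\]
for the point source. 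This is just the classical Mie series.

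Next I bound each degree-$n$ term uniformly on spherical shells $\rho_1\le r_q\le\rho_2$. By the addition theorem \eqref{eq:value-shell}, together with Cauchy--Schwarz in $m$, the degree-$n$ term is bounded pointwise by $C\langle n\rangle\,\abs{\beta\text{-scalar}_n}\,\abs{h_n^{(1)}(kr_q)}$, where $\beta\text{-scalar}_n$ denotes the $m$-independent scalar factor in $\beta_{nm}$. The asymptotics \eqref{eq:j-asymptotic}--\eqref{eq:h-asymptotic} then give the size of this product.

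\emph{Plane wave.} Here $j_n(ka_q)/h_n^{(1)}(ka_q)$ decays like $(ka_q/2)^{2n+1}/((2n+1)!!(2n-1)!!)$. Meanwhile $\abs{h_n^{(1)}(k\rho_1)}\sim(2n-1)!!/(k\rho_1)^{n+1}$. The product is therefore $\lesssim (a_q^2/\rho_1)^n/(2n+1)!!$, which is summable for every $\rho_1>0$.

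\emph{Point source.} Here the product behaves like $(a_q^2/(d_{q0}\rho_1))^n$ times polynomial factors. This is summable exactly when $\rho_1>a_q^2/d_{q0}=r_q^*$.

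The key step is the continuation itself. Each term $h_n^{(1)}(kr)Y_n^m(\widehat{\bm r})$ is a Helmholtz solution, real-analytic in $\R^3\setminus\{\bm c_q\}$. It is in fact the restriction of a harmonic polynomial times an entire function of $r^2$, multiplied by $r^{-2n-1}$-type factors, so it extends holomorphically to a complex neighborhood. The series converges uniformly on compact subsets of the claimed region, and so do all derivatives: by \eqref{eq:h-gradient-shell} and \eqref{eq:h-derivative-recurrence}, differentiation costs only polynomial factors in $n$, which do not affect the geometric rate. Hence the limit is a $C^\infty$ solution of $(\Delta+k^2)u=0$, and therefore real-analytic by elliptic regularity. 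It coincides with $v_q^{(0)}$ for $r_q>a_q$, so it is the desired continuation.

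The ball statements then follow from the triangle inequality. If $R<d_{pq}$, then $B(\bm c_p,R)$ avoids $\bm c_q$. If $R<d_{pq}-r_q^*$, then every $\bm x\in B(\bm c_p,R)$ satisfies $|\bm x-\bm c_q|>d_{pq}-R>r_q^*$. Since $\overline{B(\bm c_p,R')}$ for $R'$ slightly larger than $R$ is a compact subset of the region, uniform convergence holds there.

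The main obstacle is making the derivative and uniformity control rigorous near the inner radius. The asymptotics hold uniformly on compact radius ranges as noted after \eqref{eq:jh-root}, so I would fix any $\rho_1>r_q^*$ and use the uniform root limits there. For the point-source case I would also check that $r_q^*<a_q$ follows from $d_{q0}>a_q$.
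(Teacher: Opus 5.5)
Your proposal is correct and follows essentially the same route as the paper. You obtain the Mie series from \eqref{eq:density-field} with the coefficients \eqref{eq:G-plane-coeff} and \eqref{eq:G-point-coeff}, use large-order asymptotics of the radial factor (superexponential decay for the plane wave, $(a_q^2/(d_{q0}r))^n$ for the point source), get normal convergence on compact subsets, and conclude by analytic elliptic regularity; your addition-theorem bound over $m$ and the triangle-inequality step for the balls just fill in details the paper leaves implicit. There are two harmless slips in the plane-wave case: $j_n(ka_q)/h_n^{(1)}(ka_q)$ behaves like $\ii(ka_q)^{2n+1}/((2n+1)!!(2n-1)!!)$ with no factor $1/2$, and the product with $h_n^{(1)}(k\rho_1)$ carries an extra factor $k^n$, but neither matters since the $(2n+1)!!$ denominator dominates any geometric factor.
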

	
	\begin{proof}
		For a plane wave, combining equation \eqref{eq:G-plane-coeff} with the expansion \eqref{eq:density-field} shows that the outgoing degree-$n$ coefficient contains
		$j_n(ka_q)/h_n^{(1)}(ka_q)$. For every fixed $r>0$, multiplication by
		$h_n^{(1)}(kr)$ leaves a superexponentially decaying sequence because the factor
		$j_n(ka_q)$ retains a double-factorial denominator. The normally convergent series defines a \(C^\infty\) Helmholtz solution for every $r>0$, thus the analyticity then follows from analytic elliptic regularity, which proves (i).
		
		For point source incidence\eqref{eq:G-point-coeff} with the expansion \eqref{eq:density-field} shows that, up to
		angular factors and constants independent of $n$, the degree-$n$ radial
		factor is
		\begin{equation}\label{eq:point-four-factor}
			j_n(ka_q)h_n^{(1)}(kd_{q0})
			\frac{h_n^{(1)}(kr)}{h_n^{(1)}(ka_q)}.
		\end{equation}
		Using asymptotics \eqref{eq:j-asymptotic}--\eqref{eq:h-asymptotic} for all four factors gives its asymptotic behavior 
		\begin{equation}\label{eq:point-four-factor-asymptotic}
			\frac{-\ii}{k d_{q0}r}
			\frac{a_q}{2n+1}
			\left(\frac{a_q^2}{d_{q0}r}\right)^n
			\bigl(1+o(1)\bigr).
		\end{equation}
		 Thus the series converge normally on compact subsets of
		$r>r_q^*=a_q^2/d_{q0}$ as a \(C^\infty\) Helmholtz solution, which proves (ii) following from the analytic elliptic regularity.
	\end{proof}
	We now state the first-transfer theorem, which provides an estimate for
	$\mathcal E_N^{(1)}$.
	Define
	\begin{align}
		\rho_{\rm first}^{\rm pw}
		&:=\max_{p\ne q}\frac{a_p}{d_{pq}},\label{eq:rho-first-pw}\\
		\rho_{\rm first}^{\rm pt}
		&:=\max\left\{
		\max_p\frac{a_p}{d_{p0}},
		\max_{p\ne q}
		\frac{a_pd_{q0}}{d_{pq}d_{q0}-a_q^2}
		\right\}.
		\label{eq:rho-first-pt}
	\end{align}
	
	\begin{theorem}\label{thm:first-transfer}
		Under the assumption of Theorem \ref{thm:main}, it holds:
		\begin{align}
			\limsup_{N\to\infty}(\mathcal E_N^{(1)})^{1/N}
			&\leq\rho_{\rm first}^{\rm pw},
			&&\text{for plane wave incidence};\label{eq:first-pw}\\
			\limsup_{N\to\infty}(\mathcal E_N^{(1)})^{1/N}
			&\leq\rho_{\rm first}^{\rm pt},
			&&\text{for point source incidence}.\label{eq:first-pt}
		\end{align}
	\end{theorem}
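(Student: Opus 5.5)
We need to bound $\mathcal E_N^{(1)}$. By Lemma~\ref{lem:stability}, $\norm{\bW_N^{-1}}$ is uniformly bounded for $N\ge N_0$, so it contributes nothing to the $N$th root. The incident data tail $\norm{\QN G}$ is controlled by Lemma~\ref{lem:data-tail}: its root rate is $0$ for a plane wave and at most $\max_p a_p/d_{p0}$ for a point source. Both are dominated by the claimed factors. It therefore remains to show that $\norm{(\bA-\PN\bA\PN)G}_{\seqsp^{-1/2}}$ has root rate at most $\max_{p\ne q}a_p/d_{pq}$ in the plane-wave case, and at most the second entry of \eqref{eq:rho-first-pt} in the point-source case. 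Since there are finitely many blocks, it suffices to work with each pair $p\ne q$ separately.

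Using \eqref{eq:tail-decomposition}, we split
\[
(\bA-\PN\bA\PN)G=\QN\bA G+\PN\bA\QN G .
\]
The $(p,q)$ contributions are $Q_{p,N}A_{pq}G_q$ and $P_{p,N}A_{pq}Q_{q,N}G_q$.

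\emph{Target-side term.} By \eqref{eq:AG-isolated}, $A_{pq}G_q=V_{pp}^{-1}(v_q^{(0)}|_{\Gamma_p})=g_v$ with $v=v_q^{(0)}$. Lemma~\ref{lem:effective-radius} shows that $v$ solves the Helmholtz equation in $B(\bm c_p,R)$ for every $R<d_{pq}$ (plane wave), or for every $R<d_{pq}-r_q^*$ (point source). Note that $d_{pq}-r_q^*>d_{pq}-a_q>a_p$, so these balls contain $\Gamma_p$. Lemma~\ref{lem:analytic-local} then gives the root rate $a_p/R$. Letting $R\uparrow d_{pq}$ yields $a_p/d_{pq}$. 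In the point-source case, letting $R\uparrow d_{pq}-a_q^2/d_{q0}$ yields
\[
\frac{a_p}{d_{pq}-a_q^2/d_{q0}}=\frac{a_pd_{q0}}{d_{pq}d_{q0}-a_q^2},
\]
which is exactly the factor in \eqref{eq:rho-first-pt}.

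\emph{Source-side term.} Here the plan is to use $\norm{P_{p,N}A_{pq}Q_{q,N}G_q}\le\norm{A_{pq}Q_{q,N}G_q}$. The crude estimate from Lemma~\ref{lem:source-tail} would only give the rate $\frac{a_q}{d_{pq}-a_p}\cdot\rho_{\rm data}$, which is not enough in the plane-wave case. Instead, I will use the explicit coefficients \eqref{eq:G-plane-coeff} and \eqref{eq:G-point-coeff} together with the source-mode formula \eqref{eq:source-mode}. By the triangle inequality over degrees and Lemma~\ref{lem:self-inverse},
\[
\norm{A_{pq}Q_{q,N}G_q}_{H^{-1/2}}\le C\sum_{n>N}\Bigl\|\sum_m G_{q,nm}V_{pq}b_{q,nm}\Bigr\|_{H^{1/2}(\Gamma_p)} .
\]
For fixed $n$, Cauchy--Schwarz in $m$ and the shell identities \eqref{eq:value-shell}, \eqref{eq:h-value-shell}--\eqref{eq:h-gradient-shell} bound the inner norm by a polynomial in $n$ times $\sup_{r\ge d_{pq}-a_p}$ of the radial factor and its derivative.

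\begin{itemize}
\item In the plane-wave case the radial factor is $j_n(ka_q)h_n^{(1)}(kr)/h_n^{(1)}(ka_q)$. Its $n$th root behaves like $a_q^2/\bigl((2n)^2 r\bigr)$, which decays superexponentially. The term therefore has root rate $0$.
\item In the point-source case the radial factor is \eqref{eq:point-four-factor}. Its root rate is at most $a_q^2/\bigl(d_{q0}(d_{pq}-a_p)\bigr)$. Since $a_q<d_{q0}$ and $a_q<d_{pq}-a_p$, this is less than $a_q/d_{q0}$, which is already among the permitted factors.
\end{itemize}

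The tail sum over $n>N$ is then handled by Lemma~\ref{lem:poly-tail}. Taking maxima over the finitely many pairs, and combining with the data tail and the uniform stability bound, proves \eqref{eq:first-pw}--\eqref{eq:first-pt}.

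The main obstacle I expect is justifying the uniform large-order estimates of the products of four Bessel and Hankel factors, including their $r$-derivatives through \eqref{eq:h-derivative-recurrence}, on the range $r\in[d_{pq}-a_p,d_{pq}+a_p]$. I will handle this as in the proof of Lemma~\ref{lem:differentiated-shell}, using the uniform versions of \eqref{eq:h-ratio-root}--\eqref{eq:jh-root}. A secondary point is checking that Lemma~\ref{lem:analytic-local} applies to $v_q^{(0)}$. This needs only that $v$ is a Helmholtz solution in the open ball, and Lemma~\ref{lem:effective-radius} supplies exactly that.
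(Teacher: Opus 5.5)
Your proof is correct. Your treatment of the stability factor, the data tail, and the target-side term $Q_{p,N}A_{pq}G_q$ (via \eqref{eq:AG-isolated}, Lemma~\ref{lem:effective-radius} and Lemma~\ref{lem:analytic-local}) is the same as the paper's. The only genuine difference is the source-side term $\PN\bA\QN G$.

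The paper handles that term in one line, $\norm{\PN\bA\QN G}_{\seqsp^{-1/2}}\le\norm{\bA}\,\norm{\QN G}_{\seqsp^{-1/2}}$. It therefore inherits the data-tail rate $\rho_{\rm data}$. This rate is $0$ for a plane wave and $\max_p a_p/d_{p0}$ for a point source, and both are already allowed in \eqref{eq:rho-first-pw}--\eqref{eq:rho-first-pt}.

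Your claim that the crude bound ``is not enough in the plane-wave case'' is mistaken. There $\rho_{\rm data}=0$, so even your own product bound $\frac{a_q}{d_{pq}-a_p}\cdot\rho_{\rm data}$ already gives rate $0$.

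Your explicit route is valid: Lemma~\ref{lem:self-inverse}, the triangle inequality in $n$, Cauchy--Schwarz in $m$, and the shell identities. It gives sharper information: superexponential decay for a plane wave, and the rate $a_q^2/(d_{q0}(d_{pq}-a_p))<a_q/d_{q0}$ for a point source. That extra precision does not show up in the theorem. It also costs you the uniform value and derivative estimates of the four-factor Bessel--Hankel products on $\Gamma_p$, which you named as the main obstacle and which the paper's proof of this theorem never needs.

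A minor slip: the $n$th root of $j_n(ka_q)h_n^{(1)}(kr)/h_n^{(1)}(ka_q)$ behaves like $\e\, k a_q^2/(2nr)$, not like $a_q^2/((2n)^2r)$. Only its limit $0$ matters, so this does not affect the argument.
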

	
	\begin{proof}
		By \eqref{eq:E1} and Lemma~\ref{lem:stability}, the stability factor
		$\norm{\bW_N^{-1}}$ is uniformly bounded and therefore does not affect
		the root rate. It thus suffices to analyze the decomposition
		\begin{equation}\label{eq:AG-tail-split}
			(\bA-\PN\bA\PN)G
			=\QN\bA G+\PN\bA\QN G.
		\end{equation}
		For a plane wave, Lemma \ref{lem:data-tail} shows that $\QN G$ is superexponentially small. Boundedness of $\bA$ makes the second term in
		\eqref{eq:AG-tail-split} superexponentially small as well. Since $Q_{p,N}A_{pp}G_p=0$, we only need to consider the crossing blocks. For its first term, equation
		\eqref{eq:AG-isolated} and Lemma \ref{lem:effective-radius}(i) show that
		$A_{pq}G_q$ is obtained by applying $V_{pp}^{-1}$ to a Helmholtz field analytic in every ball $B(\bm{c}_p,R)$ with $R<d_{pq}$. Lemma~\ref{lem:analytic-local} therefore gives
		\[
		\limsup_{N\to\infty}\norm{Q_{p,N}A_{pq}G_q}_{H^{-1/2}(\Gamma_p)}^{1/N}
		\leq\frac{a_p}{d_{pq}}.
		\]
		Taking the maximum over the finite set of ordered pairs proves equation
		\eqref{eq:first-pw}.
		
		For a point source, the tail and the term
		$\PN\bA\QN G$ have root rate at most
		$\max_p a_p/d_{p0}$. By Lemma \ref{lem:effective-radius}(ii), the analytic ball for the isolated response from sphere $q$, viewed about $\bm{c}_p$, can have every radius
		\[
		R<d_{pq}-\frac{a_q^2}{d_{q0}}.
		\]
		Applying Lemma \ref{lem:analytic-local} and letting $R$ approach this limit gives
		\[
		\limsup_{N\to\infty}\norm{Q_{p,N}A_{pq}G_q}_{H^{-1/2}(\Gamma_p)}^{1/N}
		\leq
		\frac{a_p}{d_{pq}-a_q^2/d_{q0}}
		=\frac{a_pd_{q0}}{d_{pq}d_{q0}-a_q^2}.
		\]
		Taking the maximum with the data tail factor yields \eqref{eq:first-pt}.
	\end{proof}
	
		Note that Theorem~\ref{thm:first-transfer} controls only the first-transfer term
		$\mathcal E_N^{(1)}$, not the higher-transfer remainder
		$\mathcal E_N^{(\diff)}$. The latter contains all subsequent reflections
		and is bounded by the full geometric rate established in
		Lemma~\ref{lem:operator-tail}, but it need not inherit the sharper rate
		associated with the isolated sphere analyticity radius.

	We conclude this section by comparing our analysis with the two-dimensional
	theory in \cite{Fitzpatrick2021}. At the operator level, the two approaches share the same structure. In particular, diagonal preconditioning gives 
	\[
	(\Id+\bA)\Phi=G,
	\qquad
	A_{pq}=V_{pp}^{-1}V_{pq},
	\]
	and the truncation error is controlled by the data tail and
	\[
	\bA-\PN\bA\PN=\QN\bA+\PN\bA\QN.
	\]
	Consequently, the full geometric root factor has the same form in both
	dimensions,
	\[
	\rho_{\rm geo}=\max_{p\ne q}\frac{a_p}{d_{pq}-a_q},
	\]
	while Theorem~\ref{thm:first-transfer} gives sharper analyticity-based factors
	for the isolated first-transfer component in the present setting.
	
	The essential difference is the proof of the interaction-tail estimate. In two
	dimensions, \cite{Fitzpatrick2021} starts from explicit Bessel/Hankel
	translation coefficients and reduces coupled double-index sums to
	hypergeometric functions. A direct three-dimensional analogue would also have
	to resolve the $2n+1$ orders and the Gaunt coupling between
	$(n,m)$ and $(\ell,r)$. Termwise absolute value estimates obscure
	orthogonality and produce unwieldy degree--order sums, which is precisely the
	difficulty identified in \cite{Fitzpatrick2021} for a three-dimensional
	extension.
	
	Our target/source decomposition avoids this route. For the target-side
	high-degree tail $Q_{p,N}A_{pq}$, fixing the output mode and projecting the
	Green kernel cancels the regular Bessel factor and leaves the ratio
	${h_n^{(1)}(kR)}/{h_n^{(1)}(ka_p)}$. 
	For the source-side high-degree tail $A_{pq}Q_{q,N}$, fixing the input mode
	produces a physical translated degree-$n$ spherical-wave family, while
	\[
	V_{pp}^{-1}:H^{1/2}(\Gamma_p)\to H^{-1/2}(\Gamma_p),
	\]
	reduces the estimate to its value and gradient energies. In both cases the
	spherical harmonic addition theorem sums the orders first. Angular
	multiplicity, derivatives, and Sobolev weights then contribute only polynomial
	factors. The two arguments are thus rowwise and columnwise representations of
	the same off-diagonal propagation geometry.
	

	As a byproduct, our approach applies directly to multiple scattering by circular obstacles in two dimensions, and
	greatly simplifies the corresponding analysis.
	Specifically, up to normalization constants independent of the Fourier order, fixing
	a target mode $m$ gives the row kernel
	\[
	F_{p,m}^{\rm 2D}(\bm{y})
	=
	C_p
	\frac{H_m^{(1)}(k|\bm{y}-\bm{c}_p|)}
	{H_m^{(1)}(ka_p)}
	e^{-im\arg(\bm{y}-\bm{c}_p)},
	\qquad \bm{y}\in\Gamma_q .
	\]
	Since $|\bm{y}-\bm{c}_p|\ge d_{pq}-a_q$, the large-order Hankel ratio yields
	\[
	\limsup_{N\to\infty}
	\|Q_{p,N}A_{pq}\|^{1/N}
	\le
	\frac{a_p}{d_{pq}-a_q}.
	\]
	Conversely, fixing a source mode produces, again up to an
	order-independent normalization,
	\[
	V_{pq}b_{q,m}(\bm{x})
	=
	C_q J_m(ka_q)
	H_m^{(1)}(k|\bm{x}-\bm{c}_q|)
	e^{im\arg(\bm{x}-\bm{c}_q)},
	\qquad \bm{x}\in\Gamma_p ,
	\]
	and $|\bm{x}-\bm{c}_q|\ge d_{pq}-a_p$ gives
	\[
	\limsup_{N\to\infty}
	\|A_{pq}Q_{q,N}\|^{1/N}
	\le
	\frac{a_q}{d_{pq}-a_p}.
	\]
	Thus the same two directed geometric factors follow without expanding
	the interaction into coupled Fourier translation coefficients or
	estimating the resulting hypergeometric sums. Therefore, maximizing over
	ordered pairs recovers
	\[
	\rho_{\rm geo}
	=
	\max_{p\neq q}\frac{a_p}{d_{pq}-a_q}.
	\]
	The sharper first-transfer factor $a_p/d_{pq}$ for plane wave incidence follows in the same
	way from the larger analyticity radius of the isolated disk field.
	
	\section{Numerical experiments}\label{sec:numerics}
	
	In this section, we numerically test the predicted root-asymptotic convergence rates using
	three sound-soft spheres with radii
	\[
	(a_1,a_2,a_3)=(1,0.75,0.5)
	\]
	and centers
	\begin{equation}\label{eq:numerical-centers}
		\bm{c}_1=(0,0,0),\qquad \bm{c}_2=(L,0,0),\qquad
		\bm{c}_3=(0.25L,0.95L,0.18L).
	\end{equation}
	We use $L=2.3,4,7$ for the close, moderate, and far configurations,
	respectively, as illustrated in Figure~\ref{fig:sphere-configurations}. The wavenumbers are $k=0.8,2,4$.
	\begin{figure}[htbp]
		\centering
		\includegraphics[width=\textwidth]{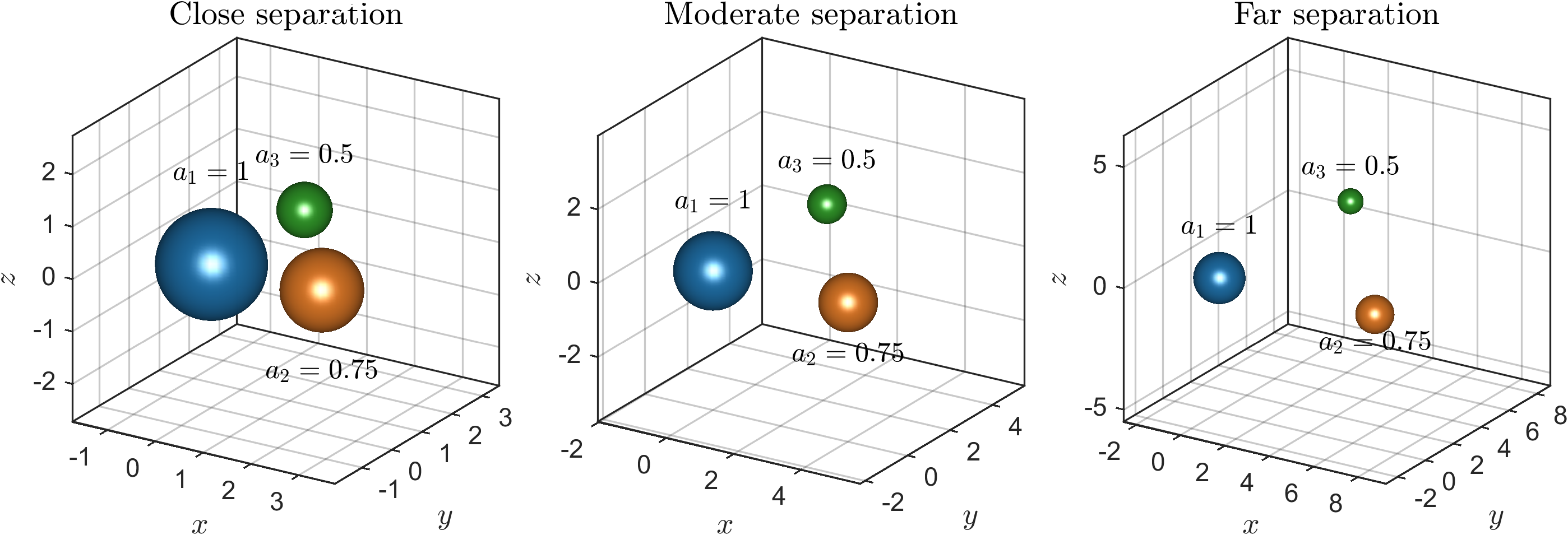}
		\caption{The three different sound-soft spheres in the close, moderate, and
			far configurations.}
		\label{fig:sphere-configurations}
	\end{figure}
	The plane wave direction and point source location are
	\[
	\widehat{\bm d}=\frac{(1,-2,1.5)}{\sqrt{7.25}},
	\qquad \bm{x}_0=(-10,-8,6).
	\]
	For the close configuration, we use the reference degree $N_{\rm ref}=24$
	and plot the range $2\leq N\leq19$. For the moderate and far configurations,
	we use $N_{\rm ref}=20$ and plot $2\leq N\leq15$. Each degree-$N$ truncated
	system is solved independently. For numerical robustness, the translation
	blocks are assembled by direct spherical-harmonic projection rather than by
	explicit Gaunt translation coefficients. The spherical quadrature has polar
	order $N_{\rm ref}+10$ and $2(N_{\rm ref}+10)+1$ azimuthal points. To assess the quadrature error introduced by the projection, we recompute the translation blocks
for the most demanding case, the close configuration with $k=4$ and
$N=19$, using $34\times69$ and $40\times81$ spherical quadrature
nodes.  Let $A_{pq}^{(1)}$ and $A_{pq}^{(2)}$ denote the resulting
blocks.  Then
\[
    \max_{p\ne q}
    \frac{\|A_{pq}^{(2)}-A_{pq}^{(1)}\|_F}
         {\|A_{pq}^{(2)}\|_F}
    =1.26\times10^{-12},
\]
showing that the translation blocks are insensitive to further quadrature refinement at the reported accuracy.

	The computed density error is
	\begin{equation}\label{eq:numerical-error}
		\epsilon_N:=
		\norm{\Phi_{N_{\rm ref}}-\Phi_N}_{\seqsp^{-1/2}},
	\end{equation}
	where the shorter coefficient vector is extended by
	zero-padding. For comparison, we set
	\begin{align}
		\rho_{\rm full}&:=\max\{\rho_{\rm data},\rho_{\rm geo}\},
		\label{eq:rho-full-numerical}\\
		\rho_{\rm first}&:=
		\begin{cases}
			\displaystyle\max_{p\ne q}\frac{a_p}{d_{pq}},
			&\text{plane-wave incidence},\\[2mm]
			\displaystyle\max\left\{\max_p\frac{a_p}{d_{p0}},
			\max_{p\ne q}\frac{a_pd_{q0}}{d_{pq}d_{q0}-a_q^2}\right\},
			&\text{point-source incidence}.
		\end{cases}
		\label{eq:rho-first-numerical}
	\end{align}
	Thus, $\rho_{\rm full}$ is the rigorous root bound for the total error,
	whereas $\rho_{\rm first}$ is the sharper factor for the first-transfer
	component. In Figures~\ref{fig:numerical-plane}
	and~\ref{fig:numerical-point}, the black markers always represent the total
	error $\epsilon_N$.
	
	Our asymptotic theory gives the root factors explicitly but does not provide useful
	prefactors in bounds of the form $C\rho^N$. 
	We estimate
	the empirical root by applying least squares to the logarithms of the last six
	data points that remain above the numerical noise floor:
	\begin{equation}\label{eq:rhohat-epsilon}
		\log\epsilon_N\approx \log C_{\rm fit}
		+N\log\widehat\rho_{\epsilon}.
	\end{equation}
	Only the fitted decay factor $\widehat\rho_\epsilon$, or equivalently the
	fitted slope $\log\widehat\rho_\epsilon$, is shown here, while the fitted intercept is
	not displayed.
	
    \begin{figure}[!htbp]
    \centering

    \begin{subfigure}[t]{0.33\textwidth}
        \centering
        \includegraphics[
            width=\linewidth
        ]{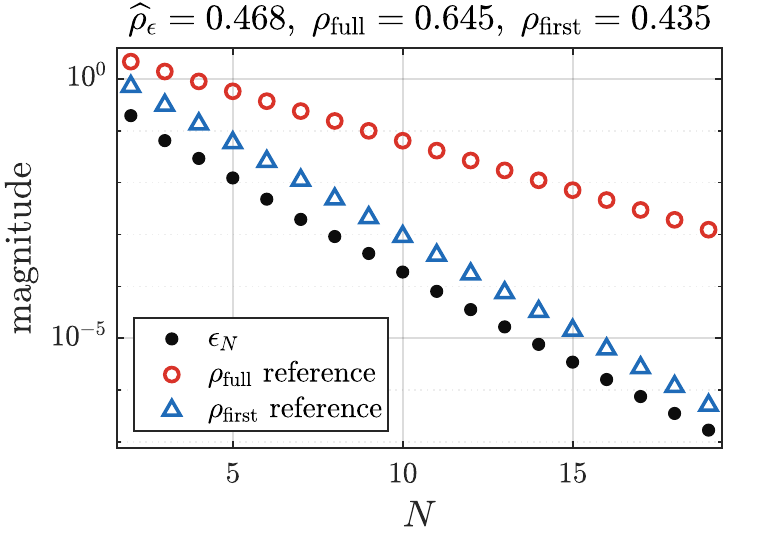}
        \caption{close, $k=0.8$}
    \end{subfigure}%
    \hspace{0.001\textwidth}%
    \begin{subfigure}[t]{0.33\textwidth}
        \centering
        \includegraphics[
            width=\linewidth
        ]{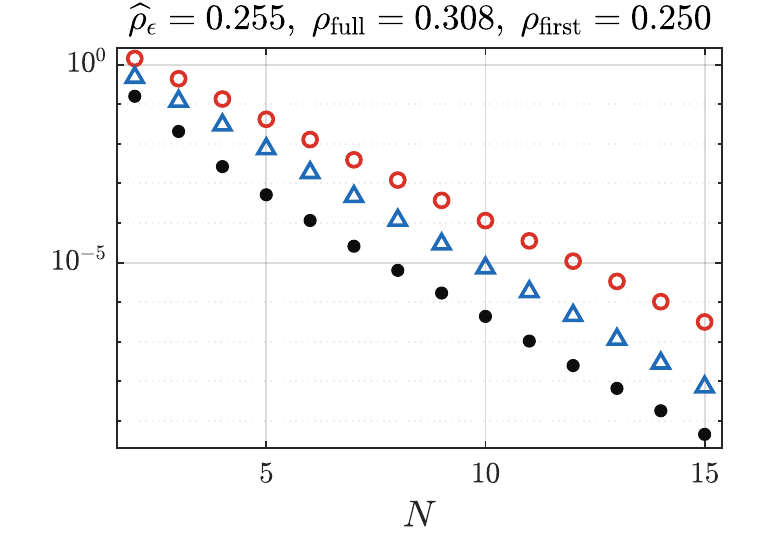}
        \caption{moderate, $k=0.8$}
    \end{subfigure}%
    \hspace{0.001\textwidth}%
    \begin{subfigure}[t]{0.33\textwidth}
        \centering
        \includegraphics[
            width=\linewidth
        ]{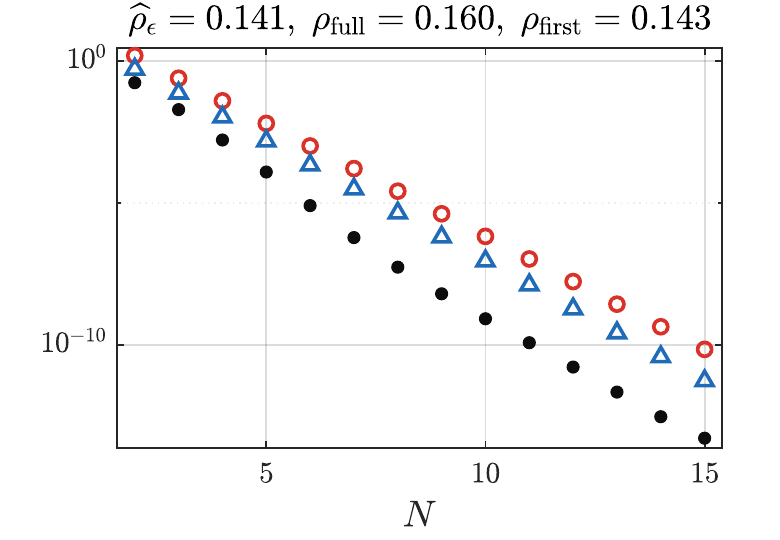}
        \caption{far, $k=0.8$}
    \end{subfigure}

    \vspace{0.5em}

    \begin{subfigure}[t]{0.33\textwidth}
        \centering
        \includegraphics[
            width=\linewidth
        ]{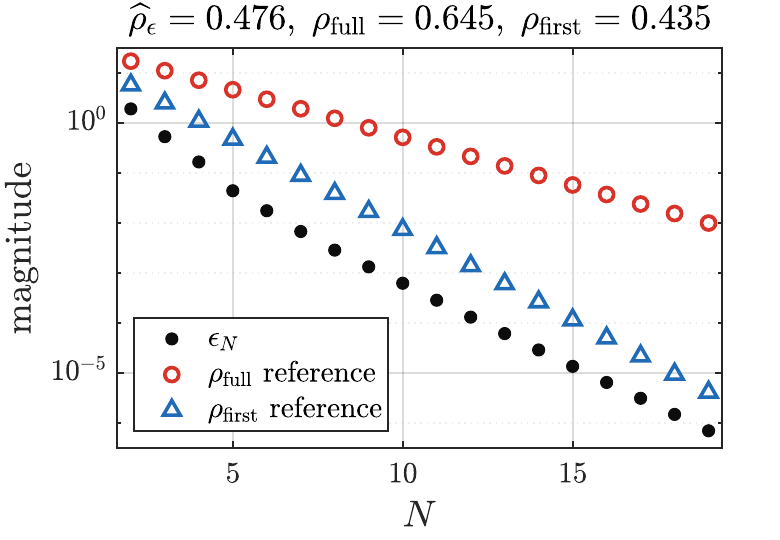}
        \caption{close, $k=2$}
    \end{subfigure}%
    \hspace{0.001\textwidth}%
    \begin{subfigure}[t]{0.33\textwidth}
        \centering
        \includegraphics[
            width=\linewidth
        ]{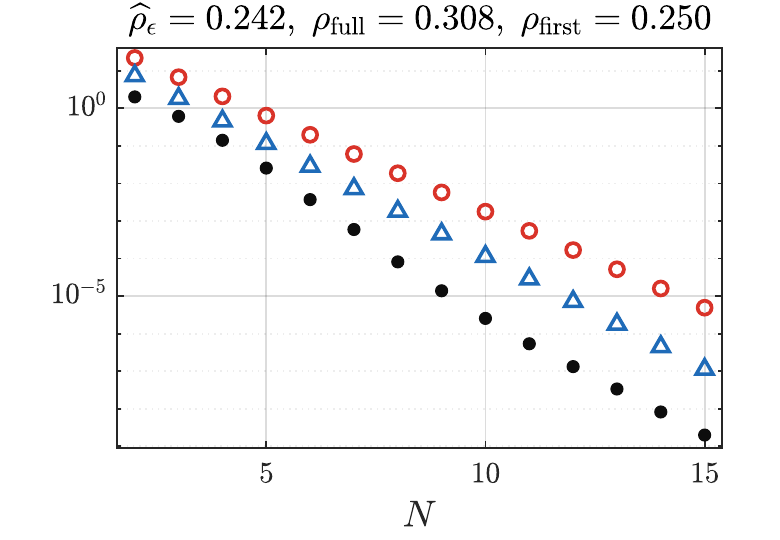}
        \caption{moderate, $k=2$}
    \end{subfigure}%
    \hspace{0.001\textwidth}%
    \begin{subfigure}[t]{0.33\textwidth}
        \centering
        \includegraphics[
            width=\linewidth
        ]{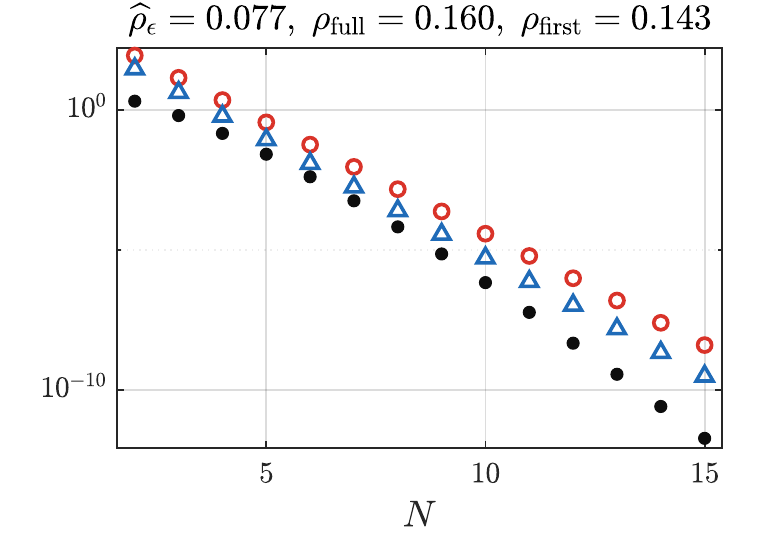}
        \caption{far, $k=2$}
    \end{subfigure}

    \vspace{0.5em}

    \begin{subfigure}[t]{0.33\textwidth}
        \centering
        \includegraphics[
            width=\linewidth
        ]{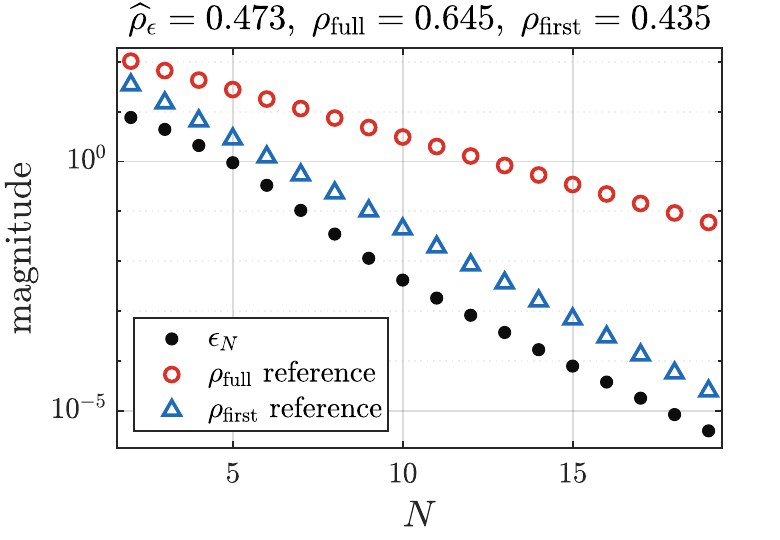}
        \caption{close, $k=4$}
    \end{subfigure}%
    \hspace{0.001\textwidth}%
    \begin{subfigure}[t]{0.33\textwidth}
        \centering
        \includegraphics[
            width=\linewidth
        ]{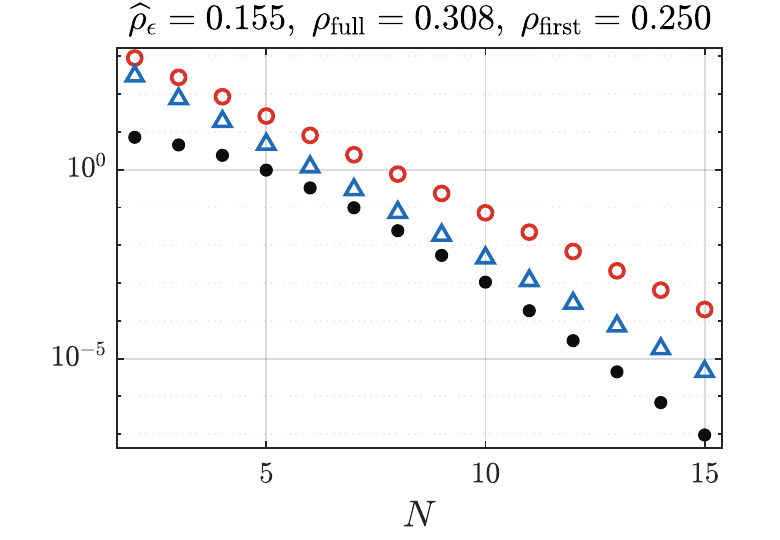}
        \caption{moderate, $k=4$}
    \end{subfigure}%
    \hspace{0.001\textwidth}%
    \begin{subfigure}[t]{0.33\textwidth}
        \centering
        \includegraphics[
            width=\linewidth
        ]{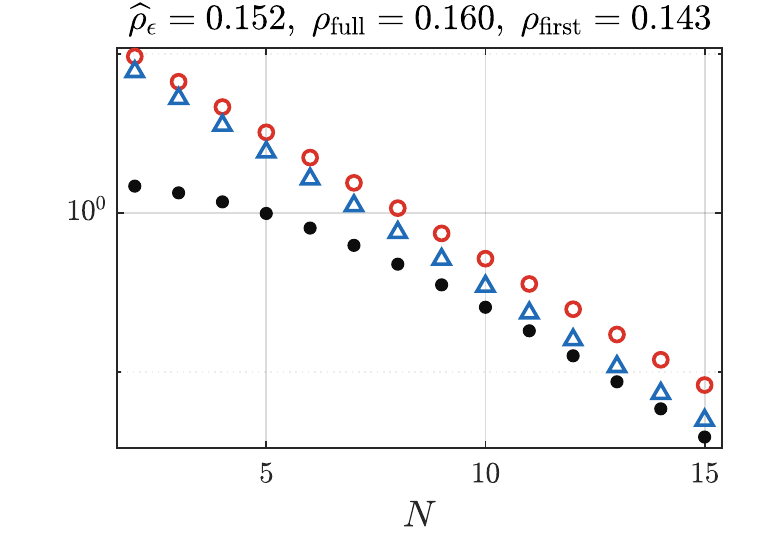}
        \caption{far, $k=4$}
    \end{subfigure}
\caption{Plane wave incidence. Filled black markers show the computed
			density error $\epsilon_N$ against the reference solution. Open red
			circles and open blue triangles show vertically shifted reference
			sequences for $\rho_{\rm full}$ and $\rho_{\rm first}$, respectively. Only their slopes are intended for comparison.
			Rows correspond to $k=0.8,2,4$, and columns correspond to the close,
			moderate, and far configurations.}
		\label{fig:numerical-plane}
\end{figure}

	\subsection{Plane wave incidence} For the configurations described above,
	all plane-wave errors in Figure~\ref{fig:numerical-plane} exhibit spectral
	decay.
	In the close configuration, the fitted roots
	$\widehat\rho_{\epsilon}=0.468$--$0.476$ lie below the rigorous full bound
	$\rho_{\rm full}=0.645$ and are closer to the first-transfer factor
	$\rho_{\rm first}=0.435$. For the moderate configuration at $k=0.8$,
	the fitted value $0.255$ is close to the first-transfer value $0.250$.
	These observations are consistent with both the full convergence theorem and
	the sharper analyticity result identified in Section~\ref{sec:first}, but
	they do not imply that the first-transfer factor bounds the total error. At
	higher wavenumbers, the finite-$N$ curves bend before entering the asymptotic
	geometric regime, reflecting pre-asymptotic Bessel and Hankel factors.
	
    \begin{figure}[!htbp]
    \centering

    \begin{subfigure}[t]{0.33\textwidth}
        \centering
        \includegraphics[
            width=\linewidth
        ]{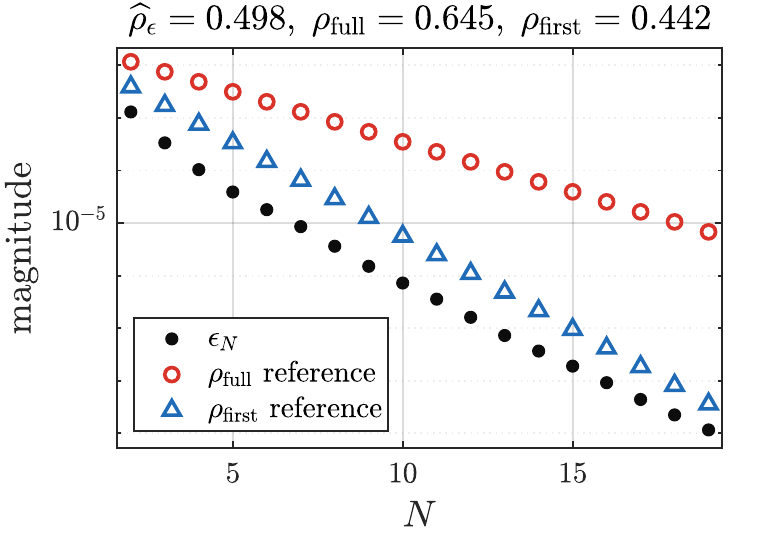}
        \caption{close, $k=0.8$}
    \end{subfigure}%
    \hspace{0.001\textwidth}%
    \begin{subfigure}[t]{0.33\textwidth}
        \centering
        \includegraphics[
            width=\linewidth
        ]{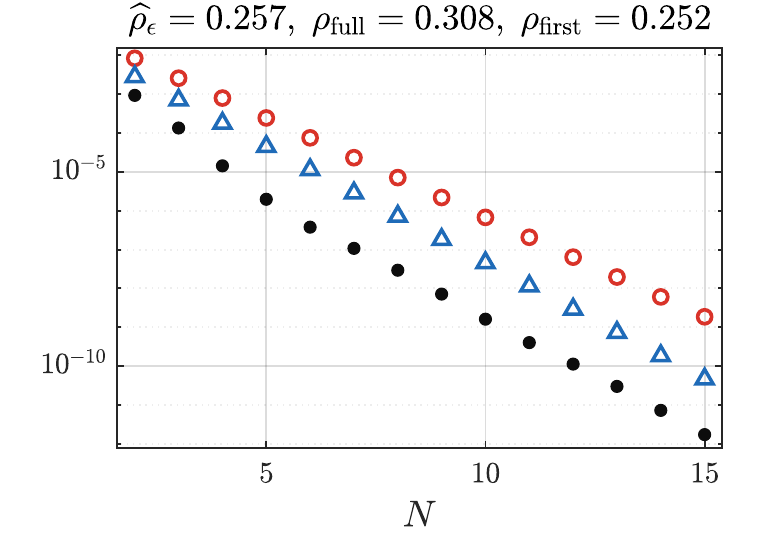}
        \caption{moderate, $k=0.8$}
    \end{subfigure}%
    \hspace{0.001\textwidth}%
    \begin{subfigure}[t]{0.33\textwidth}
        \centering
        \includegraphics[
            width=\linewidth]{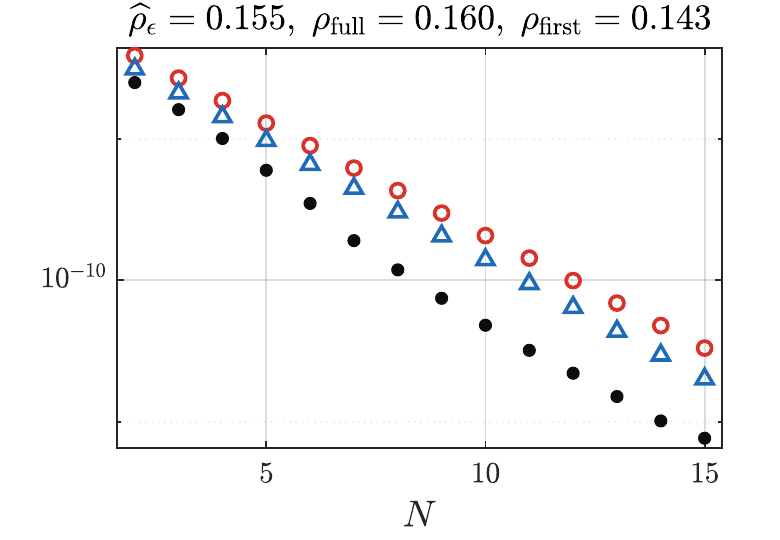}
        \caption{far, $k=0.8$}
    \end{subfigure}

    \vspace{0.5em}

    \begin{subfigure}[t]{0.33\textwidth}
        \centering
        \includegraphics[
            width=\linewidth
        ]{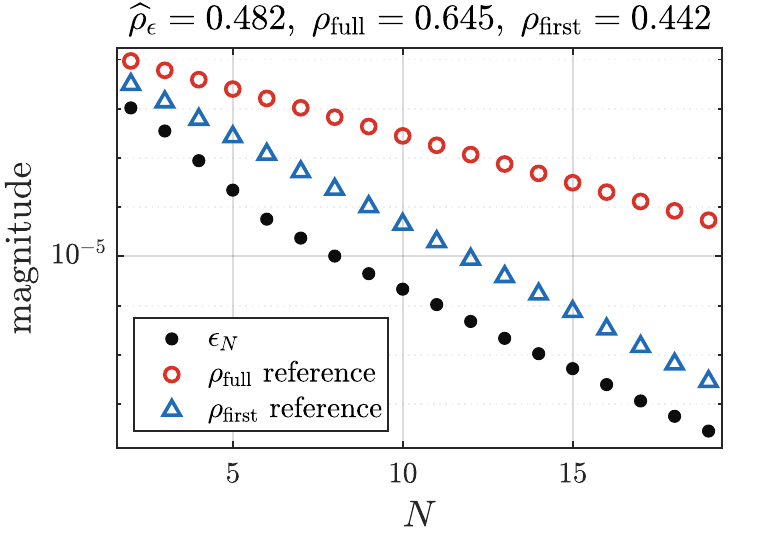}
        \caption{close, $k=2$}
    \end{subfigure}%
    \hspace{0.001\textwidth}%
    \begin{subfigure}[t]{0.33\textwidth}
        \centering
        \includegraphics[
            width=\linewidth
        ]{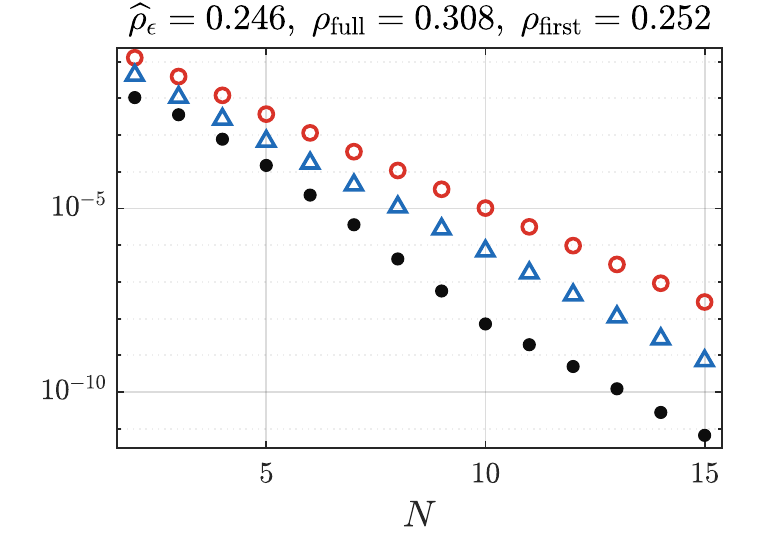}
        \caption{moderate, $k=2$}
    \end{subfigure}%
    \hspace{0.001\textwidth}%
    \begin{subfigure}[t]{0.33\textwidth}
        \centering
        \includegraphics[
            width=\linewidth
        ]{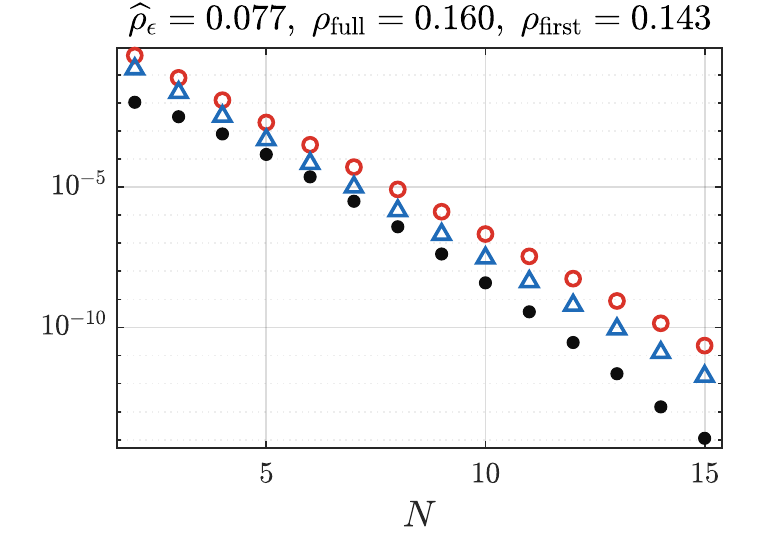}
        \caption{far, $k=2$}
    \end{subfigure}

    \vspace{0.5em}

    \begin{subfigure}[t]{0.33\textwidth}
        \centering
        \includegraphics[
            width=\linewidth
        ]{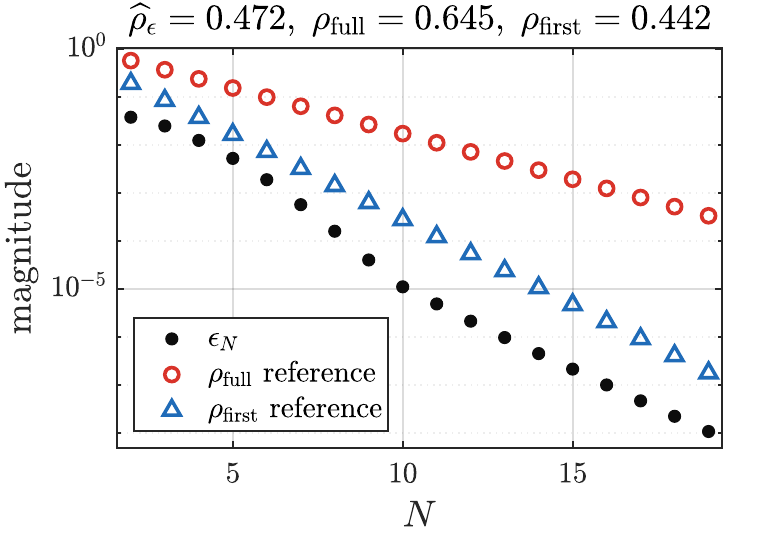}
        \caption{close, $k=4$}
    \end{subfigure}%
    \hspace{0.001\textwidth}%
    \begin{subfigure}[t]{0.33\textwidth}
        \centering
        \includegraphics[
            width=\linewidth
        ]{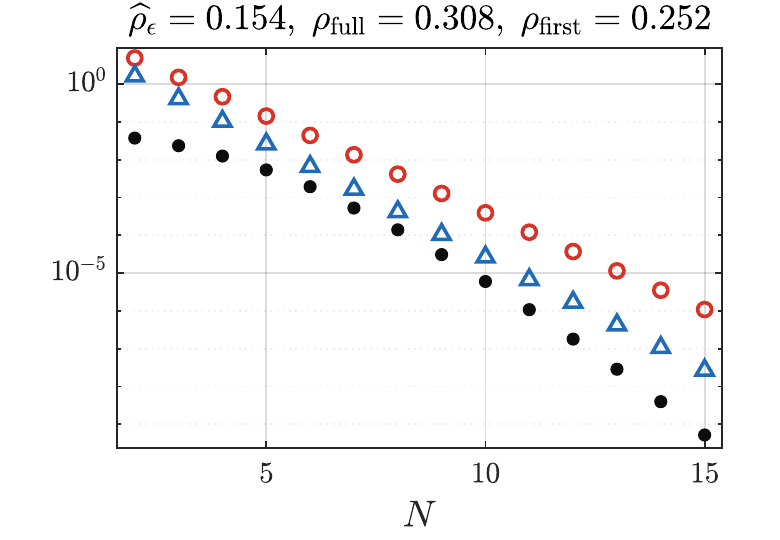}
        \caption{moderate, $k=4$}
    \end{subfigure}%
    \hspace{0.001\textwidth}%
    \begin{subfigure}[t]{0.33\textwidth}
        \centering
        \includegraphics[
            width=\linewidth
        ]{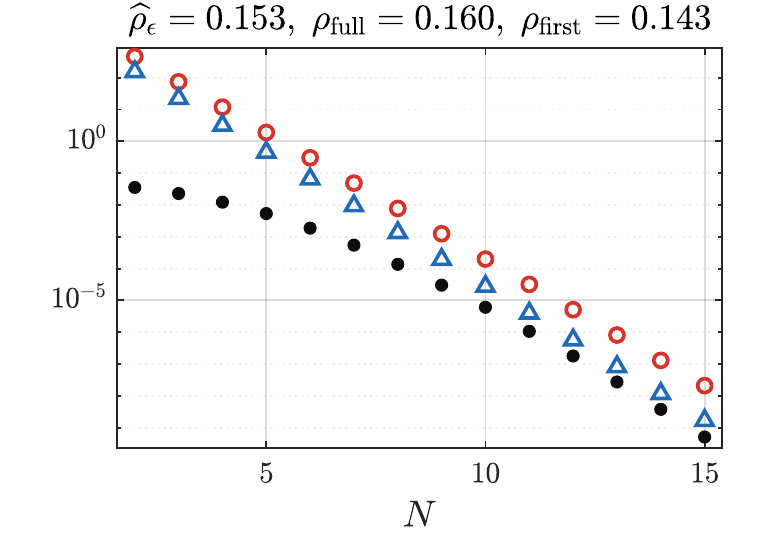}
        \caption{far, $k=4$}
    \end{subfigure}

    \caption{Point source incidence. Filled black markers show the computed
			density error $\epsilon_N$ against the reference solution. Open red
			circles and open blue triangles show the vertically shifted full-rate
			and inversion-radius first-transfer reference sequences, respectively. Only their slopes are intended for comparison.}
		\label{fig:numerical-point}
\end{figure}

	\subsection{Point source incidence}
	For the chosen point source, $\rho_{\rm data}=0.0707$ is smaller than the
	geometric factor in all three configurations, so the full bound is determined
	by $\rho_{\rm geo}$. Figure~\ref{fig:numerical-point} therefore exhibits the
	same close-to-far hierarchy as the plane wave experiment. In the close
	configuration, the first-transfer value is $0.442$, while the three fitted
	total-error roots are $0.498$, $0.482$, and $0.472$, all substantially
	closer to the first-transfer value than to the full value $0.645$. For the
	moderate configuration at
	$k=0.8$, the theoretical first-transfer value $0.252$ and the fitted value
	$0.257$ nearly coincide. In the far configuration the error reaches the
	double-precision floor rapidly, so finite-interval fits may appear faster
	than either theoretical reference. Overall, the experiments agree with the
	predicted spectral convergence and illustrate the conservative nature of
	the worst-case full bound.
	
	\section{Conclusion}\label{sec:conclusion}
	
	We have established spectral convergence of the fully coupled
	three-dimensional multisphere MEM in the natural density space
	$\seqsp^{-1/2}$. Our main full error estimate is
	\[
		\limsup_{N\to\infty}
		\norm{\Phi-\Phi_N}_{\seqsp^{-1/2}}^{1/N}
		\leq
		\max\left\{\rho_{\rm data},
		\max_{p\ne q}\frac{a_p}{d_{pq}-a_q}\right\}.
	\]
	Theorem~\ref{thm:first-transfer} further provides sharper convergence factors
	for the first-transfer component in terms of the isolated sphere analyticity
	radii.

	The proof rests on a decomposition of the interaction truncation into
	target-side and source-side high-degree tails. The former is controlled by a
	projected Green-kernel representation, whereas the latter is estimated through
	the physical value and gradient energies of a translated degree-$n$
	spherical-wave family. Summing over all orders before taking the large-degree
	estimate avoids coefficient-wise bounds on the Gaunt couplings. The remaining
	dimensional and Sobolev effects contribute only polynomial factors and therefore
	do not alter the geometric root rate. The same framework yields a shorter
	derivation of the corresponding two-dimensional bounds and identifies
	propagation distance and analyticity radius as the geometric origins of the
	convergence factors. Numerical experiments for plane wave and point source
	incidence are consistent with the predicted spectral decay and the theoretical
	root bounds. Future work includes the analysis of MEM for multiple scattering by nonspherical
	objects and more complex scattering problems in layered media.

	
	\appendix
	
	\section{Addition theorems for spherical harmonics}\label{app:spherical-identities}
	
	The spherical harmonic addition theorem states that \cite{DLMF}
	\begin{equation}\label{eq:addition-theorem}
		\sum_{m=-n}^nY_n^m(\bm{\omega})\overline{Y_n^m(\bm{\omega}')}
		=\frac{2n+1}{4\pi}P_n(\bm{\omega}\cdot\bm{\omega}').
	\end{equation}
	On the diagonal, it holds
	\begin{equation}\label{eq:value-shell}
		\sum_{m=-n}^n\abs{Y_n^m(\bm{\omega})}^2=\frac{2n+1}{4\pi}.
	\end{equation}
	Since $-\Delta_{\Sph}Y_n^m=n(n+1)Y_n^m$, differentiating \eqref{eq:addition-theorem} and taking the diagonal gives
	\begin{equation}\label{eq:gradient-shell}
		\sum_{m=-n}^n\abs{\nabla_{\Sph}Y_n^m(\bm{\omega})}^2
		=\frac{(2n+1)n(n+1)}{4\pi}.
	\end{equation}
    Equations \eqref{eq:value-shell}--\eqref{eq:gradient-shell} are the formulas that replace the order-index sums in the translation coefficients.
	Let $\bm{c}\in\mathbb R^3$ be the expansion center, and set
	\[
		r_{\bm{x}}:=|\bm{x}-\bm{c}|,\qquad
		r_{\bm{y}}:=|\bm{y}-\bm{c}|,\qquad
		r_<:=\min\{r_{\bm{x}},r_{\bm{y}}\},\qquad
		r_>:=\max\{r_{\bm{x}},r_{\bm{y}}\},
	\]
	with $\widehat{\bm r}_<,\widehat{\bm r}_>$ denoting the corresponding unit
	directions. Then, for $r_{\bm{x}}\neq r_{\bm{y}}$, the spherical wave addition
	theorem gives \cite{Stein1961,EptonDembart1995}
	\begin{equation}\label{eq:green-expansion}
		\Phi_k(\bm{x},\bm{y})=\ii k\sum_{n=0}^\infty\sum_{m=-n}^n
		j_n(kr_<)h_n^{(1)}(kr_>)
		Y_n^m(\widehat{\bm r}_<)\overline{Y_n^m(\widehat{\bm r}_>)}.
	\end{equation}

	\bibliographystyle{abbrv}
	\bibliography{bib.bib}
	
\end{document}